\documentclass[10pt,a4paper,reqno]{amsart}
\usepackage{amsthm}
\usepackage{amsmath}
\usepackage{amssymb}
\usepackage{graphicx,color}

\usepackage[font=small]{caption}
\usepackage{subcaption}

\usepackage{multirow}
\usepackage[shortlabels]{enumitem}

\usepackage{soul} 

\usepackage{cite}

\makeatletter
\theoremstyle{plain}
\newtheorem{theorem}{Theorem}
\theoremstyle{definition}
\theoremstyle{remark}
\newtheorem{remark}[theorem]{Remark}
\theoremstyle{plain}
\newtheorem{proposition}[theorem]{Proposition}
\theoremstyle{plain}
\newtheorem{lemma}[theorem]{Lemma}
\theoremstyle{plain}

\theoremstyle{definition}

\usepackage{amsfonts}
\usepackage{mathrsfs}

\newtheorem*{question*}{\it{QUESTION}}

\theoremstyle{plain}

\newtheorem*{oq*}{Open Question}

\newcommand{\N}{\mathbb{N}}
\newcommand{\R}{{\mathbb{R}}}
\newcommand{\C}{{\mathbb{C}}}

\newcommand{\dd}{{\rm d}}
\newcommand{\ii}{{\rm i}}
\newcommand{\e}{{\rm e}}

\renewcommand{\Re}{\mathop\mathrm{Re}\nolimits}
\renewcommand{\Im}{\mathop\mathrm{Im}\nolimits}

\newcommand{\dist}{\mathop\mathrm{dist}\nolimits}

\makeatother

\begin{document}

\title[]{On the discrete spectrum of non-selfadjoint operators with applications to Schr\"odinger operators with complex potentials}

\author{Sabine B{\"o}gli}
\address[Sabine B{\"o}gli]{
	Department of Mathematical Sciences, Durham University, Upper Mountjoy, Stockton Road, Durham DH1 3LE, UK
}
\email{sabine.boegli@durham.ac.uk}

\author{Sukrid Petpradittha}
\address[Sukrid Petpradittha]{
	Department of Mathematical Sciences, Durham University, Upper Mountjoy, Stockton Road, Durham DH1 3LE, UK
}
\email{sukrid.petpradittha@durham.ac.uk}

\subjclass[2020]{47A75, 35P15, 47B44, 47A10}

\keywords{Eigenvalue counting estimate, Birman--Schwinger principle, Cwikel--Lieb--Rozenblum inequality, Lieb--Thirring inequality, Schr{\" o}dinger operator, complex potential.}
\date{\today}

\begin{abstract}
For relatively form-compact perturbations of non-negative selfadjoint operators, we obtain an upper bound on the number of discrete eigenvalues in half-planes separated from the positive real axis. The bound is given in terms of a partial trace of the real part of the Birman--Schwinger operator, or an appropriate rotation thereof. While eigenvalue counting estimates of this type are classical in the selfadjoint setting, no analogous connection between  the number of discrete eigenvalues and the Birman--Schwinger operator has previously been established in the non-selfadjoint theory. The proof proceeds via techniques in antisymmetric tensor product spaces that serve as a non-selfadjoint replacement for the classical arguments. As an application to Schr\"odinger operators, we generalise the Cwikel--Lieb--Rozenblum inequality to complex potentials and derive new Lieb--Thirring type inequalities. We also analyse the sharpness of the obtained bounds and discuss their optimality within the considered framework.
\end{abstract}

\maketitle

\section{Introduction}
We prove an abstract eigenvalue counting estimate for a broad class of non-selfadjoint operators.
Let $H_0$ be a non-negative  selfadjoint operator and denote by $H$ the operator obtained by adding a relatively form-compact perturbation.
Assuming for simplicity that the perturbation can be written as an operator $V$ (see Theorem~\ref{thm:B-S} for the general situation when $H$ is induced by quadratic forms), 
we prove that, for  parameters $\alpha\in\R$ and $\varepsilon\geq 0$, the number $N=N(\Re \lambda+ \alpha\Im \lambda<-\varepsilon;H)$ of discrete eigenvalues of $H$  in the half-plane  
$$\{\lambda\in\C:\,\Re \lambda+ \alpha\Im \lambda<-\varepsilon\}$$
(counting algebraic multiplicities) satisfies
\begin{equation}\label{eq:NIntro}
N\leq \sum_{j=1}^N E_j(-\Re S-\alpha \Im S).
\end{equation}
Here $S$ is the Birman--Schwinger operator $S=(H_0+\varepsilon)^{-1/2}V(H_0+\varepsilon)^{-1/2}$, which is compact.
Throughout the paper, for a selfadjoint, bounded from above operator~$T$, we denote by $E_j(T)$ the $j$-th eigenvalue of  $T$, ordered in non-increasing sense and counting multiplicities; if there are fewer than $j$ eigenvalues above the essential spectrum, we set $E_j(T)=\max\sigma_{\rm e}(T)$. 
Note that the right-hand side of~\eqref{eq:NIntro} can be further estimated by the full trace of the negative part, ${\rm Tr}((\Re S+\alpha \Im S)_-)$, but in our application we will use a different further estimate of the partial trace (still depending on $N$) and eventually solve the inequality for $N$.
Here and in the following, $t_- = \max\{-t,0\}$ denotes the negative part of a real number~$t$, and analogously $T_-$ denotes the negative part of a selfadjoint operator $T$.

While the bound in \eqref{eq:NIntro} is interesting in itself and widely applicable, we demonstrate its usefulness by applying it to Schr\"odinger operators $H=-\Delta+V$ in the Hilbert space $L^2(\R^d)$ for any dimension $d\in\N$ where now $V$ is the operator of multiplication with a complex-valued potential. 
Setting $p=d/2+\gamma$ with
\begin{equation}\label{eq:gamma}
\gamma\geq 1/2\quad (d=1), \quad \gamma>0\quad  (d=2), \quad \gamma\geq 0 \quad (d\geq 3),
\end{equation}
we prove (see Theorem~\ref{thm:CLR-complex}) that there exists a constant $C_{d,p}>0$ such that for all $V\in L^{p}(\R^d)$, all $\alpha\in\R$ and $\varepsilon>0$,
\begin{equation}\label{eq:NalphaV}
N(\Re \lambda+ \alpha\Im \lambda<-\varepsilon;-\Delta+V)\leq C_{d,p}\,\varepsilon^{-\gamma} \int_{\R^d} (\Re V(x)+\alpha\Im V(x))_-^{p}\,\dd x.
\end{equation}
For $d\geq 3$, setting $\gamma=0$, $\alpha=0$, taking the limit as $\varepsilon\to 0$ and considering only real-valued potentials reduces to the well-known Cwikel--Lieb--Rozenblum inequality.
Our generalisation to the non-selfadjoint case closes a gap in the existing theory of non-selfadjoint Schr\"odinger operators. We also use it to prove new Lieb--Thirring type inequalities (see Theorem~\ref{thm:LT complex-B}).

In the following, we first review the Cwikel--Lieb--Rozenblum inequality and the related Lieb–Thirring inequalities, emphasising the contrast between the selfadjoint and non-selfadjoint settings, and then describe our proof methods.

\subsection*{Background on the Cwikel--Lieb--Rozenblum (CLR) inequality}
The CLR inequality states that for dimension $d\geq 3$ there exists a constant $C_d>0$ such that for any real-valued potential $V\in L^{d/2}(\R^d)$  the number $N=N(\lambda<0;-\Delta+V)$ of negative eigenvalues satisfies 
$$N\leq C_d \int_{\mathbb{R}^d} (V(x))_-^{d/2}\,\mathrm{d}x.$$
The CLR inequality was proved independently by  Cwikel, Lieb and Rozenblum in the 1970s. The proofs of Cwikel \cite{cwikel1977weak} and Lieb \cite{Lieb1976CLR,Lieb1980CLR} both use the Birman–Schwinger principle but they differ in the way they estimate the partial trace: Lieb’s approach relies on interpolation and Sobolev-type estimates, whereas Cwikel’s method is based on operator ideals and weak Schatten-class bounds and is adaptable to a wider class of operators. Rozenblum’s proof \cite{Rozenblum1972CLR,Rozenblum1976CLR} is substantially different, relying on spectral and variational techniques rather than the Birman–Schwinger framework.

In the non-selfadjoint setting, the discrete eigenvalues exhibit substantially more complicated behaviour: they are typically non-real and may, in principle, accumulate at non-zero points of the essential spectrum $[0,\infty)$. This phenomenon was first demonstrated by Pavlov \cite{Pavlov2}, who constructed a Schr\"odinger operator in $L^2(0,\infty)$
with a rapidly decaying real potential and infinitely many discrete eigenvalues.
In his construction,  the non-selfadjointness arises solely from a complex Robin boundary condition $f'(0)=z f(0)$ (for a $z\in\C$). 
In a similar spirit, but with the non-selfadjointness coming from the potential itself, the first author constructed a Schr\"odinger operator with a complex potential in $L^p(\R^d)$ (in any dimension $d$) such that infinitely many discrete eigenvalues accumulate \emph{everywhere} at the essential spectrum (see \cite{bogli2017schrodinger} for $p>d$ and the joint work with Cuenin \cite{bogli2023counterexample} for the relaxed condition $p>(d+1)/2$).
In fact, the CLR inequality does not hold in the non-selfadjoint case; the number $N(-\Delta+V)$ of \emph{all} discrete eigenvalues is not controlled by $\|V\|_{L^{d/2}(\R^d)}^{d/2}$: the present authors (jointly with \v{S}tampach \cite{bogli2025lieb})  studied the purely imaginary potential $V_h=\ii h\chi_{B_1(0)}$ in $d\geq 2$, with $h>0$ and $\chi_{B_1(0)}$ denoting the characteristic function of the unit ball in $\R^d$. For  $p=d/2$ one can estimate
$$ \frac{N(-\Delta+V_h)}{\|V_h\|_{L^p(\R^d)}^p}=\frac{1}{\|V_h\|_{L^p(\R^d)}^p}\sum_{\lambda\in\sigma_{\rm d}(-\Delta+V_h)} 1 
\geq \frac{1}{\|V_h\|_{L^p(\R^d)}^p}\sum_{\lambda\in\sigma_{\rm d}(-\Delta+V_h)} \frac{{\rm dist}(\lambda,[0,\infty))^{p}}{|\lambda|^{d/2}}$$
and it was shown that the right-hand side diverges logarithmically in $h$ in the strong-coupling limit as $h\to \infty$.
This proves that for complex potentials no CLR inequality can hold for \emph{all} discrete eigenvalues.

Pavlov \cite{Pavlov1} proved in dimensions $d=1$ and $d=3$ that the total number of eigenvalues $N(-\Delta+V)$ is finite if 
$$\sup_{x\in\R^d} \exp(-\delta|x|^{1/2})|V(x)|<\infty$$
for some $\delta>0$, but the result provided no quantitative bound on the number $N(-\Delta+V)$ in terms of the potential.
In~\cite{FrankLaptevSafronov} Frank, Laptev and Safronov proved a quantitative  bound in odd dimensions: in $L^2(0,\infty)$ with Dirichlet boundary condition $f(0)=0$, the bound is
$$N(-\Delta+V)\leq \inf_{\delta>0}\,\frac{1}{\delta^2}\left(\int_0^\infty \exp(\delta x)|V(x)|\,\mathrm d x\right)^2,$$
and in odd dimensions $d\geq 3$ there exists a constant $C_d>0$ such that
$$N(-\Delta+V)\leq \inf_{\delta>0}\,\frac{C_d}{\delta^2}\left(\int_0^\infty \exp(\delta |x|)|V(x)|^{(d+1)/2}\,\mathrm d x\right)^2.$$

Our result in Eq.~\eqref{eq:NalphaV} (see Theorem~\ref{thm:CLR-complex}) establishes a quantitative CLR bound for the non-selfadjoint case which holds in complex half-planes that do not intersect the essential spectrum.

\subsection*{Background on Lieb--Thirring (LT) inequalities}

The LT inequalities are named after Lieb and Thirring \cite{lie-thi_prl75,LTdetailed} and play a central role in mathematical physics, notably in the proof of stability of matter, see also \cite{frank2022schrodinger} for an overview.
For $p=d/2+\gamma$ with $\gamma$ satisfying \eqref{eq:gamma},
the LT inequalities state that there exists a constant $C_{d,p}>0$ such that for any real-valued $V\in L^p(\R^d)$,
$$\sum_{\lambda\in\sigma_{\rm d}(-\Delta+V)} \lambda_-^\gamma \leq C_{d,p}\int_{\mathbb{R}^d} (V(x))_-^{p}\,\mathrm d x.$$
Note that the CLR inequality corresponds to $\gamma=0$.

For  $\gamma\geq 1$, Frank, Laptev, Lieb, and Seiringer \cite{frank2006lieb} proved that the LT inequalities remain valid for Schr\"odinger operators with complex potentials, provided that we sum only over the eigenvalues in appropriate regions of the complex plane. Specifically, the bounds hold for eigenvalues in the left half-plane or, more generally, outside the sector  $\{\lambda\in\C\,:\,|\Im \lambda |<\kappa \Re \lambda\}$ around the positive real axis (for $\kappa>0$). 
In this case, there exists a constant $C_{d,p}>0$ such that for all $V\in L^p(\R^d)$ and $\kappa>0$,
\begin{equation}\label{eq:FLLS}
	\sum_{\lambda\in\sigma_{\rm d}(-\Delta+V),\, |\Im\lambda|\geq\kappa\Re\lambda} |\lambda|^{\gamma}
	\leq C_{d,p}\left(1+\frac{2}{\kappa}\right)^{p}\int_{\R^d} |V(x)|^{p}\;\dd x.
\end{equation}
Note that the $\kappa$-dependent constant on the right-hand side blows up as $\kappa\to 0$.
Moreover, the estimate fails if all discrete eigenvalues are taken into account; explicit counterexamples are provided in \cite{bogli2017schrodinger,bogli2023counterexample,bogli2021lieb, bogli2025lieb}.

Using Eq.~\eqref{eq:NalphaV}, in Lemma~\ref{lem:eigenvalue outside cones} we prove that Eq.~\eqref{eq:FLLS} continues to hold respectively for $\gamma>1/2$ (if $d=1$), $\gamma>0$ (if $d=2$), $\gamma\geq 0$ (if $d\geq 3$),
and in Theorem~\ref{thm:LT complex-B} we derive new LT type inequalities where the left-hand side also depends on ${\rm dist}(\lambda,[0,\infty))$.
This builds on earlier research to find analogues of the LT inequalities for the complex-potential case, see 
e.g.\ \cite{bogli2017schrodinger,boegli2023improved,bogli2023counterexample,bogli2025lieb,bogli2021lieb,
cuenin2022schrodinger,
demuth2013eigenvalues,fra_18,frank2006lieb,GolStep,han_11,
demuth2009discrete,BPoptimal}.  

\subsection*{Why a new proof method was needed for \boldmath $\gamma<1$}
The proof method in \cite{frank2006lieb} does not rely on any structure specific to Schr\"odinger operators and was subsequently extended in \cite{BruneauOuhabaz}, again  for $\gamma\geq 1$, to 
more general non-selfadjoint operators $H$ for which its real part $\Re H$ (induced by the real part of the quadratic form) is bounded from below. 
The argument for eigenvalues in the left half-plane proceeds as follows.
One first shows that any $N$ discrete eigenvalues in the left half-plane, ordered such that $\Re \lambda_1 \leq \Re \lambda_2\leq \Re \lambda_3\leq  \cdots\leq \Re\lambda_N$, satisfy
$$\forall\, 1\leq k\leq N:\quad \sum_{j=1}^k (\Re \lambda_j)_- \leq \sum_{j=1}^k E_j( -\Re H) \leq \sum_{j=1}^k E_j((\Re H)_-).$$
For matrices an inequality of this type goes back to Fan \cite[Thm.~2]{KyFan1950}. 
Given such an inequality between two ordered sets of non-negative numbers $\{x_j\}_{j=1}^N$, $\{y_j\}_{j=1}^N$ (here $x_j=(\Re \lambda_j)_-$, $y_j= E_j((\Re H)_-)$), a classical result of Hardy, Littlewood and P\'olya \cite{Hardy1929convex} implies that for any convex function $\Phi:[0,\infty)\to [0,\infty)$ with $\Phi(0)=0$,
$$\forall\, 1\leq k\leq N:\quad \sum_{j=1}^k \Phi(x_j)\leq \sum_{j=1}^k \Phi(y_j).$$
This estimate was rediscovered by Aizenman and Lieb \cite{aizenman1978semi} for the convex function
 $x\mapsto x^\gamma$ with $\gamma\geq 1$ (note that it is not convex for $\gamma<1$), which yields
\begin{equation}\label{eq:gamma<1}
\sum_{j=1}^N (\Re \lambda_j)_-^{\gamma} \leq \sum_{j=1}^N E_j((\Re H)_-)^{\gamma}.
\end{equation}
Now, if  $H=-\Delta+V$ is a non-selfadjoint Schr\"odinger operator, the selfadjoint LT inequalities can be applied to the negative eigenvalues of $\Re H=-\Delta+\Re V$ to obtain the half-plane bounds in  \cite{frank2006lieb}. Eigenvalue estimates in rotated half-planes, and hence outside a sector, follow by comparing $\Re(\e^{\ii \varphi}\lambda_j)$ with the eigenvalues of $\Re(\e^{\ii\varphi}H)$.

The case $0\leq \gamma< 1$ was left open in \cite{frank2006lieb}.
The above proof strategy breaks down in this regime due to the failure of convexity.
In fact, Bruneau and Ouhabaz \cite[p.~6]{BruneauOuhabaz} constructed a matrix counterexample to \eqref{eq:gamma<1}, which remains a counterexample even if the right-hand side is multiplied by an arbitrary constant.
This obstruction shows that a fundamentally different proof method is required for $\gamma<1$.

Our proof method relies on the Birman–Schwinger principle. For Schr\"odinger operators, this is a well-established tool: it states that $\lambda\in\C\backslash [0,\infty)$ is an eigenvalue of $-\Delta+V$ if and only if $-1$ is an eigenvalue of a corresponding compact Birman--Schwinger operator, which can be written formally as $|V|^{1/2}{\rm sgn}(V)(-\Delta-\lambda)^{-1}|V|^{1/2}$. For $\lambda<0$ it is often more convenient to work with $(-\Delta-\lambda)^{-1/2}V(-\Delta-\lambda)^{-1/2}$. For a general treatment of the Birman--Schwinger principle in the non-selfadjoint setting, we refer to \cite{generalizedB-S}.
For non-selfadjoint Schrödinger operators, Frank \cite{Frank1} used the Birman--Schwinger principle to derive bounds on individual eigenvalues. In the selfadjoint case, one has the stronger result (see \cite[Thm.~4.24]{frank2022schrodinger}) stating that the number $N$ of eigenvalues $\lambda_{j}$ of $-\Delta+V$ with $\lambda_j<-\varepsilon$ is equal to the number of eigenvalues $\mu_j<-1$ of the selfadjoint operator $S=(-\Delta+\varepsilon)^{-1/2}V(-\Delta+\varepsilon)^{-1/2}$ which is compact under the assumptions $V\in L^{d/2+\gamma}(\R^d)$ with $\gamma$ satisfying \eqref{eq:gamma}. In particular, this implies
$$N=\sum_{j=1}^N 1 \leq  \sum_{j=1}^N (\mu_j)_-=\sum_{j=1}^N E_j(-S).$$
Our abstract result in Theorem \ref{thm:B-S} provides a non-selfadjoint analogue of this estimate in the general setting of relatively form-compact perturbations of non-negative selfadjoint operators. This extension is nontrivial, since the variational principle is no longer available in the non-selfadjoint case. Instead, we work in antisymmetric tensor product spaces (see Subsection~\ref{sec:antisymm}), following the general philosophy of \cite{frank2006lieb}, but applying it at the level of the Birman--Schwinger operator rather than directly to the Schr\"odinger operator.
To obtain concrete bounds from our abstract result, one may combine Theorem~\ref{thm:B-S} with the weak Schatten-class estimates due to Cwikel \cite{cwikel1977weak}, as we demonstrate for Schr\"odinger operators.

\section{Abstract eigenvalue counting estimate}

Let $H_{0}$ be a selfadjoint, non-negative operator in a Hilbert space $\mathcal{H}$.
For a (possibly different)  Hilbert space~$\mathcal G$,  assume that $W_0:\mathcal D(W_0)\subset\mathcal H\to \mathcal G$ and $W:\mathcal D(W)\subset\mathcal H\to \mathcal G$ are linear operators such that $\mathcal{D}(H_{0}^{1/2})\subset\mathcal{D}(W)\cap\mathcal{D}(W_{0})$ and $W(H_{0}+I)^{-1/2}$, $W_{0}(H_{0}+I)^{-1/2}$ are compact. Due to \cite[Lem.~B.1]{fra_18}, the quadratic form
\[
	\|H_{0}^{1/2}u\|^{2}+\langle{W_{0}u,Wu}\rangle
\]
with form domain $\mathcal{D}(H_{0}^{1/2})$ is closed, sectorial and induces a unique $m$-sectorial operator $H$ in $\mathcal{H}$. 
Besides, according to \cite[Prop.~B.2]{fra_18}, one obtains the equality of essential spectra $\sigma_{\rm e}(H)=\sigma_{\rm e}(H_{0})$, 
and the discrete spectrum $\sigma_{\rm d}(H):=\sigma(H)\backslash\sigma_{\rm e}(H)$ is at most countable and consists of eigenvalues of finite algebraic multiplicities.

\begin{remark}
The above compactness assumptions imply that the quadratic form $q[u]=\langle{W_{0}u,Wu}\rangle$ 
with form domain $\mathcal D(q)=\mathcal{D}(W)\cap\mathcal{D}(W_{0})\supset\mathcal{D}(H_{0}^{1/2})$
is \emph{relatively form-compact} with respect to the quadratic form $\|H_{0}^{1/2}u\|^{2}$ on $\mathcal{D}(H_{0}^{1/2})$ (see \cite[p.~369]{reedsimonmethods4}).
 In addition, for every $\varepsilon>0$, the operators $W(H_{0}+\varepsilon)^{-1/2}$, $W_{0}(H_{0}+\varepsilon)^{-1/2}$ and hence $(W(H_{0}+\varepsilon)^{-1/2})^{*}(W_{0}(H_{0}+\varepsilon)^{-1/2})$ are compact.
\end{remark}


The following result is the main result of this section. 

\begin{theorem}\label{thm:B-S}
Let $\varepsilon>0$ and $\alpha\in\R$. 
Then, for any $N\in\N$ such that there are $\lambda_{1},\dots,\lambda_{N}\in\sigma_{\rm d}(H)$ (repeated according to their algebraic multiplicities) with $\Re\lambda_{j} + \alpha\Im\lambda_{j}<-\varepsilon$ for all $j=1,\dots,N$, we have
\[
	N\leq \sum_{j=1}^{N}E_j(-\Re S-\alpha\Im S),
\]
where $S:=(W(H_{0}+\varepsilon)^{-1/2})^{*}(W_{0}(H_{0}+\varepsilon)^{-1/2})$ is a compact operator on $\mathcal H$.
In particular, if $(\Re S+\alpha \Im S)_-$ is a trace-class operator, then 
$$N(\Re\lambda+\alpha\Im\alpha<-\varepsilon;H)\leq {\rm Tr}((\Re S+\alpha \Im S)_-)<\infty.$$
\end{theorem}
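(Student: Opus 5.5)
The plan is to reduce the statement to a single quadratic-form inequality for one vector in the $N$-fold antisymmetric tensor product, where the Ky Fan maximum principle turns the top of the spectrum of a second-quantised operator into $\sum_{j\le N}E_j(\cdot)$. Throughout I write $K:=(H_0+\varepsilon)^{1/2}$ and $\beta:=1-\ii\alpha$. Note that for $z\in\C$ we have $\Re(\beta z)=\Re z+\alpha\Im z$. Put $T:=-\Re S-\alpha\Im S$, a compact selfadjoint operator. With this notation, for $u,w\in\mathcal D(H_0^{1/2})$ the form of $H+\varepsilon$ is
\[
h_\varepsilon[u,w]=\langle Ku,Kw\rangle+\langle S Ku,Kw\rangle ,
\]
because $\langle W_0u,Ww\rangle=\langle (WK^{-1})^*W_0K^{-1}\,Ku,Kw\rangle$ (I will fix the conjugate-linear slot to match the paper's convention). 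Consequently $\Re\big(\beta\, h_\varepsilon[u,u]\big)=\|Ku\|^2-\langle TKu,Ku\rangle$.

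\emph{Step 1 (triangularisation).} Let $\mathcal M$ be the $N$-dimensional sum of the root spaces of $\lambda_1,\dots,\lambda_N$, which is $H$-invariant. By Schur's theorem I choose an orthonormal basis $u_1,\dots,u_N$ of $\mathcal M$ with $(H+\varepsilon)u_k=\sum_{j\le k}a_{jk}u_j$ and $a_{kk}=\lambda_k+\varepsilon$. Since $\mathcal M\subset\mathcal D(H)\subset\mathcal D(H_0^{1/2})$, all forms above are defined on $\mathcal M$.

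\emph{Step 2 (lift to $\bigwedge^N$).} Consider $\Phi:=u_1\wedge\cdots\wedge u_N\neq 0$ and $\Psi:=Ku_1\wedge\cdots\wedge Ku_N=K^{\otimes N}\Phi$, which is nonzero because $K$ is injective. Let $h^{(N)}_\varepsilon$ be the form of the second-quantised operator $\sum_i (H+\varepsilon)_i$ acting on $\bigwedge^N\mathcal H$. Triangularity gives $\sum_i (H+\varepsilon)_i\Phi=\big(\sum_k(\lambda_k+\varepsilon)\big)\Phi$, so
\[
h^{(N)}_\varepsilon[\Phi,\Phi]=\Big(\sum_k(\lambda_k+\varepsilon)\Big)\|\Phi\|^2 .
\]
On the other hand, the single-particle identity above lifts, by multilinearity, to
\[
h^{(N)}_\varepsilon[\Phi,\Phi]=N\|\Psi\|^2+\langle S^{(N)}\Psi,\Psi\rangle ,
\]
where $S^{(N)}:=\sum_i S_i$ on $\bigwedge^N\mathcal H$. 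Multiplying by $\beta$ and taking real parts, and using $\Re\lambda_k+\alpha\Im\lambda_k+\varepsilon<0$, I obtain
\[
N\|\Psi\|^2-\langle T^{(N)}\Psi,\Psi\rangle<0 .
\]

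\emph{Step 3 (Ky Fan).} For a compact selfadjoint $T$, the supremum of the numerical range of $T^{(N)}=\sum_iT_i$ restricted to $\bigwedge^N\mathcal H$ is $\sum_{j=1}^NE_j(T)$, with the convention on $E_j$ fixed in the paper (if fewer eigenvalues lie above $0=\max\sigma_{\rm e}(T)$, pad with $0$). This is the antisymmetric-tensor form of Ky Fan's maximum principle, obtained by diagonalising $T$. Hence $N\|\Psi\|^2<\sum_{j\le N}E_j(T)\|\Psi\|^2$. Dividing by $\|\Psi\|^2>0$ yields the claim, even with strict inequality. For the trace-class statement, $\sum_{j\le N}E_j(T)\le \mathrm{Tr}(T_+)=\mathrm{Tr}\big((\Re S+\alpha\Im S)_-\big)$ for every admissible $N$, which bounds the counting function.

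The main technical obstacle is the justification in Step 2. The forms $h_\varepsilon$ are only defined on the form domain, so I need to verify that the form identity for $\sum_i(H+\varepsilon)_i$ on decomposable antisymmetric vectors really equals $\sum_i\big(\|Ku_{i}\|^2+\langle SKu_i,Ku_i\rangle\big)$ expanded via determinants. Concretely, I would write $\langle\Phi',\Phi\rangle$-type expressions as sums over permutations of products of one-particle inner products, and check that the cross terms match on both sides. Doing this directly with the $N\times N$ Gram and form matrices, rather than with abstract tensor operators, avoids any domain issues for unbounded $K$.
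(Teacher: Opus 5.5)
Step~2 of your proposal has a genuine error, and it is algebraic rather than a domain technicality. The identity
\[
h^{(N)}_\varepsilon[\Phi,\Phi]=N\|\Psi\|^2+\langle S^{(N)}\Psi,\Psi\rangle,\qquad \Psi=K^{\otimes N}\Phi,
\]
does not follow from the one-particle identity by multilinearity. The lift $X\mapsto X^{(N)}=\sum_i X_i$ is additive, while $K^{\otimes N}$ is multiplicative, so $(K(I+S)K)^{(N)}\neq K^{\otimes N}(I+S)^{(N)}K^{\otimes N}$. Already for $S=0$ the left side is $\sum_i\|Ku_i\|^2$ (your $u_i$ are orthonormal), whereas the right side is $N\det(\langle Ku_j,Ku_i\rangle)_{i,j=1}^N$. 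For $N=2$ and $Ku_1\perp Ku_2$ with norms $a,b$, this reads $a^2+b^2$ versus $2a^2b^2$. What multilinearity actually gives is $\sum_i\langle TKu_i,Ku_i\rangle>\sum_i\|Ku_i\|^2$ for the non-orthonormal family $\{Ku_i\}$, and this does not imply $\sum_{j\le N}E_j(T)\ge N$. Your Step~1 and the first display of Step~2 are correct. Indeed $(H+\varepsilon)^{(N)}\Phi=\mathrm{Tr}((H+\varepsilon)|_{\mathcal M})\,\Phi$ for any basis of the invariant subspace, which would neatly replace the Jordan-chain perturbation of Lemma~\ref{lem:semisimple}. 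Step~3 is the Ky Fan argument of Lemma~\ref{lem:antisymmprop}~v).

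The gap cannot be closed, because the inequality itself is false in general. Take $\mathcal H=\C^2\oplus L^2(\R)$, $H_0=\mathrm{diag}(9,0)\oplus(-\Delta)$, $\varepsilon=1$, $\alpha=0$, and the finite-rank perturbation $V_0\oplus 0$ with
\[
V_0=\begin{pmatrix}-15&4\ii\\ 4\ii&1\end{pmatrix}.
\]
For instance, let $W=P$ and $W_0=V_0P$, where $P$ is the orthogonal projection onto $\C^2$. The $\C^2$-block of $H$ is $\begin{pmatrix}-6&4\ii\\ 4\ii&1\end{pmatrix}$. Its eigenvalues $-\frac52\pm\frac{\sqrt{15}}{2}\ii$ (the roots of $z^2+5z+10$) are discrete eigenvalues with real part $-\frac52<-1$, so $N=2$. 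However, $S=\begin{pmatrix}-3/2&4\ii/\sqrt{10}\\ 4\ii/\sqrt{10}&1\end{pmatrix}\oplus 0$, hence $-\Re S=\mathrm{diag}(3/2,-1)\oplus 0$ and $E_1+E_2=\frac32+0<2$. In this example your two expressions for $h^{(2)}_\varepsilon[\Phi,\Phi]$ are $-3$ and $(2-\frac12)\cdot 10=15$.

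The paper's proof of Lemma~\ref{lem:geometric bd} fails at the corresponding step. From $\tilde S_K\phi_j=(\lambda_j+\varepsilon)R\phi_j$ one only gets $\tilde S_K^{(N)}\Psi=\sum_j(\lambda_j+\varepsilon)\,\phi_1\wedge\cdots\wedge R\phi_j\wedge\cdots\wedge\phi_N$, not $\bigl(\sum_j(\lambda_j+\varepsilon)\bigr)R^{(N)}\Psi$. The pairing of this vector with $\Psi$ has no sign control unless all $\lambda_j$ coincide. In the example (with the bounded perturbation there equal to $0$), $\Psi\in\wedge^2\C^2$ and $\langle\tilde S_K^{(2)}\Psi,\Psi\rangle=\mathrm{Tr}_{\C^2}(I+S)\,\|\Psi\|^2=\frac32\|\Psi\|^2>0$, which contradicts \eqref{eq:ReIm}.
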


The proof involves two steps. First we prove a result, using antisymmetric tensor products, which applies only to the geometric eigenspace. Then we demonstrate a perturbation method to break Jordan chains into geometric eigenspaces. Combining these two steps will prove the theorem.

\subsection{Operators acting on antisymmetric tensor products}\label{sec:antisymm}

In the following we will use the notions of antisymmetric tensor products and operators acting on them. We give basic definitions and properties here, and refer for more details to e.g.\ \cite[Sect.~1.5]{simon2005trace} and \cite[Sect.~II.4 and~VIII.10]{reed1980methods}.

Let $N\in\N$. The $N$-fold tensor product $\otimes^N\mathcal{H}:=\mathcal{H}\otimes\cdots\otimes\mathcal{H}$ ($N$ copies)
is equipped with the scalar product defined on simple tensors by
$$\langle \psi_{1}\otimes\cdots\otimes\psi_{N},\phi_{1}\otimes\cdots\otimes\phi_{N}\rangle
:=\prod_{j=1}^{N} \langle{\psi_{j},\phi_{j}}\rangle,$$
and continued to $\otimes^N\mathcal{H}$ by linearity and completion.
Denote by  $\text{perm}(N)$ the group of permutations on $\{1,\dots,N\}$. For a permutation $\pi\in\text{perm}(N)$ let ${\rm sgn}(\pi)\in\{\pm 1\}$ denote its sign, which is the determinant of its corresponding permutation matrix. 
Define the antisymmetrisation operator $\mathcal A_N:\otimes^N\mathcal{H}\to \otimes^N\mathcal{H}$ by
$$\mathcal A_N (\phi_{1}\otimes\cdots\otimes\phi_{N}):=\frac{1}{N!}\sum_{\pi\in\textrm{perm}(N)}{\rm sgn}(\pi)(\phi_{\pi(1)}\otimes\cdots\otimes\phi_{\pi(N)}).$$
Then $\mathcal A_N$ is selfadjoint and $\mathcal A_N^2=\mathcal A_N$, i.e.\ it is an orthogonal projection. 
Its range 
$$\wedge^N\mathcal H:={\rm ran}(\mathcal A_N)$$
is called the \emph{antisymmetric $N$-fold tensor product} of $\mathcal H$. 
Define the \emph{antisymmetric tensor product}
$$\phi_{1}\wedge\cdots\wedge\phi_{N}:=\frac{1}{\sqrt{N!}}\sum_{\pi\in\textrm{perm}(N)}{\rm sgn}(\pi)(\phi_{\pi(1)}\otimes\cdots\otimes\phi_{\pi(N)})
=\sqrt{N!}\,\mathcal A_N (\phi_{1}\otimes\cdots\otimes\phi_{N}),$$
which changes its sign under exchanging of two elements $\phi_i, \phi_j$ for $i\neq j$.
Note that if $\phi_i=\phi_j$ for some $i\neq j$, then $\phi_{1}\wedge\cdots\wedge\phi_{N}=0$.
If $\mathcal H$ is a function space over the variable $x$ (e.g.\ $L^2(\R^d)$), the antisymmetric tensor product $\phi_{1}\wedge\cdots\wedge\phi_{N}$ is a function of $(x_1,\dots,x_N)$ which can be identified with the 
\emph{Slater determinant}
\[
(\phi_{1}\wedge\cdots\wedge\phi_{N})(x_1,\dots,x_N)
=
\frac{1}{\sqrt{N!}}\det((\phi_{i}(x_{j}))_{i,j=1}^N).
\]

A linear operator $T$ in $\mathcal H$ can be lifted to a linear operator $T^{(N)}$ in $\otimes^N\mathcal{H}$ via
$$T^{(N)}(\phi_{1}\otimes\cdots\otimes\phi_{N}):=\sum_{j=1}^N (\phi_{1}\otimes\cdots \otimes T \phi_j\otimes\cdots\otimes\phi_{N}).$$
Then $T^{(N)}$ commutes with $\mathcal A_N$, hence  the antisymmetric space $\wedge^N\mathcal H$ is an invariant subspace under $T^{(N)}$; in the following we only use the operator restricted to $\wedge^N\mathcal H$ and continue denoting it by $T^{(N)}$.

We collect a few properties that we will use afterwards.

\begin{lemma}\label{lem:antisymmprop}
Let $T$ be a linear operator in $\mathcal H$.
For $N$ linearly independent elements $\phi_1,\dots,\phi_N\in\mathcal D(T)$ let 
$\Psi=\phi_{1}\wedge\cdots\wedge\phi_{N}\in\wedge^N\mathcal H$.
Consider the $N\times N$ matrices $A$ and $M$ with entries
\[
A_{ij}=\langle \phi_j, \phi_i\rangle\quad\text{and}\quad M_{ij}=\langle T\phi_j,\phi_i\rangle.
\] 
Then the following properties hold.
\begin{enumerate}
\item[\rm i)] We have $\|\Psi\|^2=\det A>0$.
\item[\rm ii)] If $T$ is selfadjoint and non-negative (resp. uniformly positive), then so is the matrix~$M$; in particular, $A$ is selfadjoint and uniformly positive  and hence $A^{-1/2}$ is well-defined.
\item[\rm iii)] We have $\langle T^{(N)}\Psi,\Psi\rangle=\|\Psi\|^2\, {\rm Tr}(A^{-1/2}MA^{-1/2})$.
\item[\rm iv)] If $T$ is selfadjoint and non-negative, then $\langle T^{(N)}\Psi,\Psi\rangle\geq 0$.
\item[\rm v)] If $T$ is selfadjoint and bounded from above, then 
$$\frac{\langle T^{(N)}\Psi,\Psi\rangle}{\|\Psi\|^2}\leq \sum_{j=1}^N E_j(T).$$
\end{enumerate}
\end{lemma}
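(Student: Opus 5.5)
The natural route is to reduce everything to finite-dimensional linear algebra on $V:={\rm span}\{\phi_1,\dots,\phi_N\}$, which is $N$-dimensional by linear independence. We then use the Gram–Schmidt change of basis encoded by $A^{-1/2}$. For i), the scalar product on simple tensors gives
\[
\langle\phi_1\wedge\cdots\wedge\phi_N,\phi_1\wedge\cdots\wedge\phi_N\rangle=\frac{1}{N!}\sum_{\pi,\sigma}{\rm sgn}(\pi){\rm sgn}(\sigma)\prod_{j}\langle\phi_{\pi(j)},\phi_{\sigma(j)}\rangle .
\]
Reindexing by $\tau=\sigma\pi^{-1}$, the $N!$ equal contributions collapse to $\sum_\tau{\rm sgn}(\tau)\prod_i A_{\tau(i) i}=\det A$. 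Positivity of $\det A$ follows because $A$ is the Gram matrix of linearly independent vectors: for $c\in\C^N$ one has $\langle Ac,c\rangle=\|\sum_j c_j\phi_j\|^2>0$ for $c\neq0$, with the index convention $A_{ij}=\langle\phi_j,\phi_i\rangle$ to be checked.

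For ii), the same computation gives $\langle Mc,c\rangle=\langle T u,u\rangle$ with $u=\sum_j c_j\phi_j$. Hence $M$ is Hermitian when $T$ is selfadjoint, nonnegative when $T\ge0$, and positive definite when $T$ is uniformly positive. Applying this with $T=I$ gives $M=A$.

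For iii), the key step is to pass to an orthonormal basis of $V$. Set $\psi_i=\sum_k (A^{-1/2})_{ki}\phi_k$ (indices adjusted as needed). The $\psi_i$ are orthonormal, and multilinearity and antisymmetry of $\wedge$ give
\[
\psi_1\wedge\cdots\wedge\psi_N=\det(A^{-1/2})\,\Psi=(\det A)^{-1/2}\Psi .
\]
It therefore suffices to show $\langle T^{(N)}\Phi,\Phi\rangle=\sum_i\langle T\psi_i,\psi_i\rangle$ for $\Phi=\psi_1\wedge\cdots\wedge\psi_N$ with orthonormal $\psi_i$. Expanding $T^{(N)}\Phi$, each term $\langle\psi_{\pi(1)}\otimes\cdots\otimes T\psi_{\pi(j)}\otimes\cdots,\psi_{\sigma(1)}\otimes\cdots\rangle$ vanishes unless $\pi(k)=\sigma(k)$ for all $k\neq j$, which forces $\pi=\sigma$. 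The sum then reduces to $\sum_i\langle T\psi_i,\psi_i\rangle$. Finally, $\sum_i\langle T\psi_i,\psi_i\rangle={\rm Tr}(A^{-1/2}MA^{-1/2})$ by direct substitution, since the matrix of $\langle T\psi_j,\psi_i\rangle$ is $A^{-1/2}MA^{-1/2}$ (as $A^{-1/2}$ is Hermitian). Multiplying by $\|\Psi\|^2=\det A$ gives iii). The main obstacle is purely bookkeeping: making the index conventions in $A$, $M$ and the basis change consistent so that one gets $A^{-1/2}MA^{-1/2}$ rather than a transpose. Since the trace is invariant under transposition and conjugation only up to conjugation, I would keep careful track of which slot is conjugate-linear.

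Item iv) follows from iii) and ii): $A^{-1/2}MA^{-1/2}\ge0$, so its trace is nonnegative. For v), iii) gives the quotient as $\sum_i\langle T\psi_i,\psi_i\rangle$ for an orthonormal family $\psi_1,\dots,\psi_N$ in $\mathcal D(T)$. The Ky Fan maximum principle then yields
\[
\sum_{i=1}^N\langle T\psi_i,\psi_i\rangle\le\sum_{j=1}^N E_j(T)
\]
for selfadjoint $T$ bounded above, with the convention for $E_j$ when eigenvalues run out. This is the standard min-max consequence: the compression $P_VTP_V$ is an $N\times N$ Hermitian matrix whose $k$-th largest eigenvalue is at most $E_k(T)$ by the min-max principle, and its trace is the sum of those eigenvalues.
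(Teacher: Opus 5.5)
Your proof is correct. For ii) and iv) it coincides with the paper's, but for i), iii) and v) you take a different, self-contained route.

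The paper gets i) and $\langle T^{(N)}\Psi,\Psi\rangle=\|\Psi\|^2\,{\rm Tr}(MA^{-1})$ by citing the standard Slater-determinant matrix-element formulas, and then passes to ${\rm Tr}(A^{-1/2}MA^{-1/2})$ by cyclicity of the trace. For v) it invokes the second-quantisation fact that $\sum_{j=1}^N E_j(T)$ is the top of the spectrum of the selfadjoint operator $T^{(N)}$ on $\wedge^N\mathcal H$, together with the variational principle for $T^{(N)}$.

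You instead expand the permutations directly for i). For iii) you use L\"owdin's symmetric orthonormalisation. With your index choice $\psi_i=\sum_k (A^{-1/2})_{ki}\phi_k$ and Hermiticity of $A^{-1/2}$, one gets exactly $\langle\psi_j,\psi_i\rangle=(A^{-1/2}AA^{-1/2})_{ij}=\delta_{ij}$ and $\langle T\psi_j,\psi_i\rangle=(A^{-1/2}MA^{-1/2})_{ij}$. So the bookkeeping you were worried about works out as written; the opposite index order would produce $(A^{T})^{-1/2}A(A^{T})^{-1/2}$, which is not the identity unless $A$ is real. Then v) reduces to Ky Fan's inequality for the orthonormal family $\psi_i\in\mathcal D(T)$. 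Your compression argument proves this via Courant--Fischer and the max--min characterisation $E_k(T)=\sup_{L\subset\mathcal D(T),\,\dim L=k}\inf_{u\in L,\,\|u\|=1}\langle Tu,u\rangle$, which remains valid with the paper's convention when eigenvalues run out.

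Your version buys independence from the spectral theory of the lifted operator. You need no selfadjointness or domain considerations for $T^{(N)}$ and no identification of its spectrum including the essential part, and $A^{-1/2}$ acquires a transparent origin. It also reverses the logic of the remark following the lemma, where Ky Fan's principle is deduced from v) rather than used to prove it. The paper's version is shorter modulo citations, and it keeps v) inside the antisymmetric-tensor framework that drives the rest of the argument.
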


\begin{proof}
Due to (2.69)-(2.71) in \cite[Sect.~2.2.2]{friedrich2006theoretical},
$\|\Psi\|^2=\det A$
and, if $\det A \neq 0$, 
\[
\langle T^{(N)}\Psi, \Psi \rangle
=\|\Psi\|^2 \sum_{i,j=1}^N M_{ij}(A^{-1})_{ji}=\|\Psi\|^2 \,{\rm Tr}(MA^{-1});
\]
note that $\det A\neq 0$ due to the linear independence of the $\phi_i$.
Below, in the proof of the claim ii), we show that $A$ is uniformly positive.
Then the cyclic property for traces yields that for any square matrices $A_1,A_2,A_3$ of equal size we have ${\rm Tr}(A_1 A_2 A_3)={\rm Tr}(A_2 A_3 A_1)$. Therefore, we obtain
$${\rm Tr}(MA^{-1}) = {\rm Tr}(A^{-1/2} M A^{-1/2}).$$
This proves claims i) and iii).

Next we prove the claim ii). To this end, assume that $T$ is selfadjoint and non-negative, hence $T^{1/2}$ is well-defined. 
Then $M$ is clearly a selfadjoint matrix. To show non-negativity,  let $c=(c_i)_{i=1}^N\in\C^N$.
Then
$$\langle M c,c\rangle=\sum_{i,j=1}^N M_{ij}c_j\overline{c_i}=\sum_{i,j=1}^N \langle T^{1/2} \phi_j, T^{1/2}\phi_i\rangle c_j\overline{c_i}=\left\|\sum_{i=1}^{N} c_i T^{1/2}\phi_i\right\|^2\geq 0.$$
If $T$ is even uniformly positive, then $\left\|\sum_{i=1}^{N} c_i  T^{1/2}\phi_i\right\|^2=0$ if and only if $c=0$ (due to the linear independence of the $\phi_i$ and hence of the $T^{1/2}\phi_i$).
This proves the claim ii); in particular, if we set $T=I$, then we obtain the claim about $A$.

The claim iv) is an immediate consequence of claims ii) and iii). 
The claim v) follows from the variational principle since the right-hand side of the inequality is equal to the maximum of the spectrum of the selfadjoint operator $T^{(N)}$ in the antisymmetric space $\wedge^N\mathcal H$, see \cite[Thm.~2~(i)]{secondquantization}.
\end{proof}

\begin{remark}
If $T$ is selfadjoint, bounded from above, and we choose $\phi_1,\dots,\phi_N\in\mathcal D(T)$ to be orthonormal, then
$A=I$. Hence Lemma~\ref{lem:antisymmprop} implies that $\|\Psi\|=1$ and
$$\sum_{j=1}^N E_j(T)\geq \langle T^{(N)}\Psi,\Psi\rangle={\rm Tr}(M)=\sum_{j=1}^N \langle T \phi_j,\phi_j\rangle.$$
Note that if $N$ does not exceed the number of discrete eigenvalues of $T$ above the essential spectrum, then we have equality if the $\phi_j$ are chosen to be the eigenvectors corresponding to $E_j(T)$. 
Thus we obtain (see \cite[Prop.~1.33]{frank2022schrodinger})
$$\sum_{j=1}^N E_j(T)=\max\left\{ \sum_{j=1}^N \langle T\phi_j,\phi_j\rangle:\, \phi_1,\dots,\phi_N\text{ orthonormal}\right\}.$$
This was first proved for finite-dimensional spaces by Fan \cite[Thm.~1]{KyFan1949}.
However, in the following we will work with eigenfunctions of non-selfadjoint operators, which are not orthonormal in general, hence we have to work with the general formula in Lemma~\ref{lem:antisymmprop}~iii).
\end{remark}

Now we shall state and prove the first core ingredient for the proof of Theorem~\ref{thm:B-S}. 
Note that, in contrast to Theorem~\ref{thm:B-S}, here we only take the \emph{geometric} multiplicities into account.

\begin{lemma}\label{lem:geometric bd}
Let $K:\mathcal{H}\to\mathcal{H}$ be a bounded linear operator,
and let $\varepsilon>0$ and $\alpha\in\R$.
Then, for any $N\in\N$ such that there are $\lambda_{1},\dots,\lambda_{N}\in\sigma_{\rm d}(H+K)$ (repeated according to their geometric multiplicities) with $\Re\lambda_{j} + \alpha\Im\lambda_{j}<-\varepsilon$ for all $j=1,\dots,N$, we have
\begin{align*}
	N\leq \sum_{j=1}^{N}E_j\left(-\Re S_K-\alpha\Im S_K\right),
\end{align*}
where $S_K:=(W(H_{0}+\varepsilon)^{-1/2})^{*}(W_{0}(H_{0}+\varepsilon)^{-1/2})+(H_{0}+\varepsilon)^{-1/2}K(H_{0}+\varepsilon)^{-1/2}$ is a bounded operator on $\mathcal H$.
\end{lemma}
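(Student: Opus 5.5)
Since $\lambda_1,\dots,\lambda_N$ are listed according to geometric multiplicity, we can choose linearly independent eigenvectors $u_1,\dots,u_N\in\mathcal D(H)$ with $(H+K)u_j=\lambda_j u_j$. This is possible because eigenvectors belonging to distinct eigenvalues are linearly independent, and within each eigenvalue we take a basis of its geometric eigenspace. Set $R:=(H_0+\varepsilon)^{1/2}$ and $\phi_j:=Ru_j$; these lie in $\mathcal H$ because $u_j\in\mathcal D(H_0^{1/2})$, and they remain linearly independent since $R$ is injective.

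The plan is to rewrite the eigenvalue equation in Birman--Schwinger form using the quadratic form. For every $v\in\mathcal D(H_0^{1/2})$,
\[
\lambda_j\langle u_j,v\rangle=\|\ldots\|\text{-form}=\langle H_0^{1/2}u_j,H_0^{1/2}v\rangle+\langle W_0u_j,Wv\rangle+\langle Ku_j,v\rangle .
\]
Substituting $v=R^{-1}\psi$ and adding $\varepsilon\langle u_j,v\rangle$ to both sides gives the identity
\[
\langle \phi_j,\psi\rangle+\langle S_K\phi_j,\psi\rangle=(\lambda_j+\varepsilon)\langle R^{-2}\phi_j,\psi\rangle .
\]
Hence $(I+S_K)\phi_j=(\lambda_j+\varepsilon)R^{-2}\phi_j$, where $R^{-2}=(H_0+\varepsilon)^{-1}$ is bounded, selfadjoint and positive. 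Now pair this with $\phi_i$. Writing $A_{ij}=\langle\phi_j,\phi_i\rangle$, $M_{ij}=\langle S_K\phi_j,\phi_i\rangle$ and $B_{ij}=\langle R^{-2}\phi_j,\phi_i\rangle$, we obtain the matrix identity
\[
A+M=BD,\qquad D=\mathrm{diag}(\lambda_j+\varepsilon).
\]

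The key step is to extract $N\le -\mathrm{Tr}\bigl(A^{-1/2}(\Re M+\alpha\Im M)A^{-1/2}\bigr)$ from this identity. Set $\theta:=\arctan\alpha$ (or use the factor $1-\ii\alpha$ directly) and apply the map $X\mapsto \tfrac12\bigl((1-\ii\alpha)X+(1+\ii\alpha)X^*\bigr)$. Since $A$ is selfadjoint, this gives
\[
A+\Re M+\alpha\Im M=\Re\bigl((1-\ii\alpha)BD\bigr)\quad\text{(matrix real part)}.
\]
Here $(1-\ii\alpha)(\lambda_j+\varepsilon)$ has real part $\Re\lambda_j+\alpha\Im\lambda_j+\varepsilon<0$. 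Conjugating by $A^{-1/2}$ and taking traces, I then need
\[
\mathrm{Tr}\bigl(A^{-1/2}\Re(BD')A^{-1/2}\bigr)\le 0,\qquad D'=(1-\ii\alpha)D,
\]
where $\Re D'$ is negative definite diagonal and $B$ is positive definite. This is the main obstacle, because $\Re(BD')$ need not be negative semidefinite when $B$ and $D'$ do not commute.

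To get around this, I would use the trace cyclically. First, $\mathrm{Tr}(A^{-1}\Re(BD'))=\Re\mathrm{Tr}(A^{-1}BD')$. I would then aim to choose the basis so that this term is controlled: for example, replace $\phi_j$ by $\phi_j$ rescaled, or orthonormalise after passing to the antisymmetric product $\Psi=\phi_1\wedge\cdots\wedge\phi_N$. In the wedge picture, Lemma~\ref{lem:antisymmprop}~iii) gives
\[
\langle (\Re S_K+\alpha\Im S_K)^{(N)}\Psi,\Psi\rangle/\|\Psi\|^2=\mathrm{Tr}(A^{-1/2}(\Re M+\alpha\Im M)A^{-1/2}),
\]
and Lemma~\ref{lem:antisymmprop}~v) applied to $T=-\Re S_K-\alpha\Im S_K$ then bounds $-\mathrm{Tr}(\cdots)$ by $\sum_{j\le N}E_j(T)$. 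So it remains to show that $\mathrm{Tr}(A^{-1/2}\Re(BD')A^{-1/2})\le0$, which together with $\mathrm{Tr}(A^{-1/2}AA^{-1/2})=N$ gives the claim.

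For this inequality I would use the fact that $\Psi$ is an eigenvector of the lifted operator. From $(I+S_K)\phi_j=\mu_jR^{-2}\phi_j$ we get $R^{2}(I+S_K)\phi_j=\mu_j\phi_j$ formally, so $\Psi$ is an eigenvector of the lifted $(R^2(I+S_K))^{(N)}$ with eigenvalue $\sum\mu_j$. The fallback is to work with the quadratic quantity $\langle \bigl((1-\ii\alpha)R^2\bigr)^{(N)}\Psi,\Psi\rangle$ and the positivity of $R^{-2}$ via Lemma~\ref{lem:antisymmprop}~iv). The hard part is exactly this step: controlling the sign of $\Re\mathrm{Tr}(A^{-1}BD')$ for a non-commuting positive $B$ and a diagonal $D'$ with negative real part.
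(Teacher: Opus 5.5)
Your reduction is correct. From $(I+S_K)\phi_j=(\lambda_j+\varepsilon)(H_0+\varepsilon)^{-1}\phi_j$ you obtain $A+M=BD$, and Lemma~\ref{lem:antisymmprop}~iii) and v) turn the claim into
\[
\frac{\langle T^{(N)}\Psi,\Psi\rangle}{\|\Psi\|^2}=N-\Re\,\mathrm{Tr}(A^{-1}BD'),\qquad T=-\Re S_K-\alpha\Im S_K .
\]
Everything therefore hinges on $\Re\,\mathrm{Tr}(A^{-1}BD')\le 0$, which you leave open. None of your proposed remedies can supply it. The left-hand side depends only on $\mathrm{span}\{\phi_j\}$: a change of basis multiplies $\Psi$ by a scalar, and $\mathrm{Tr}(A^{-1}BD)$ is invariant under $\phi_j\mapsto c_j\phi_j$. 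Positivity of $(H_0+\varepsilon)^{-1}$ only yields $\mathrm{Tr}(A^{-1}B)>0$.

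In fact the missing inequality is false for $N\ge 2$, and so is the lemma itself. Take $\mathcal H=\C^2$, $H_0=\mathrm{diag}(1,0)$, $W=I$, $W_0=V=\begin{pmatrix}-2&\ii\\ \ii&0\end{pmatrix}$, $K=0$, $\alpha=0$, $\varepsilon=1/4$. Then $H=\begin{pmatrix}-1&\ii\\ \ii&0\end{pmatrix}$ has the two simple eigenvalues $(-1\pm\ii\sqrt3)/2$. Their real part $-1/2$ is $<-\varepsilon$, so $N=2$. But $\Re S=(H_0+\varepsilon)^{-1/2}(\Re V)(H_0+\varepsilon)^{-1/2}=\mathrm{diag}(-8/5,0)$, so $E_1(-\Re S)+E_2(-\Re S)=8/5<2$. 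Taking the direct sum with $-\mathrm{d}^2/\mathrm{d}x^2$ on $L^2(\R)$, with no perturbation there, gives the same failure with $\sigma_{\rm e}=[0,\infty)$. The example also refutes Theorem~\ref{thm:B-S}, which is the case $K=0$. In your notation, since $\phi_1,\phi_2$ span $\C^2$, $\mathrm{Tr}(A^{-1}BD)=\mathrm{tr}(I+S)=2/5>0$.

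The paper gets past this point with \eqref{eq:crucial}, i.e.\ $\tilde S_K^{(N)}\Psi=\bigl(\sum_j(\lambda_j+\varepsilon)\bigr)R^{(N)}\Psi$ with $R=(H_0+\varepsilon)^{-1}$. In your notation this asserts $\mathrm{Tr}(A^{-1}BD)=\mathrm{Tr}(D)\,\mathrm{Tr}(A^{-1}B)$. That identity is wrong for $N\ge 2$, because the lift $T\mapsto T^{(N)}$ is a derivation, not multiplicative. The correct computation is $\tilde S_K^{(N)}\Psi=\sum_j(\lambda_j+\varepsilon)\,\phi_1\wedge\cdots\wedge R\phi_j\wedge\cdots\wedge\phi_N$. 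Its pairing with $\Psi$ is exactly your $\|\Psi\|^2\,\mathrm{Tr}(A^{-1}BD)$. In the example, the paper's formula would give $-\frac{12}{5}\|\Psi\|^2$, whereas the true value is $\frac{2}{5}\|\Psi\|^2$.

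So the obstacle you isolated is genuine, and it is precisely where the paper's argument breaks down. The argument (yours or the paper's) does work for $N=1$. It also works when $H_0$ is a multiple of the identity, since then $B$ is a multiple of $A$ and one recovers Ky Fan's inequality. It does not work in general.
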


\begin{remark}\label{rem:closure}
Before we give the proof, let us remark that $S_K$ is the closure of the operator $\tilde S_K-I$
where 
\begin{align*}
	\tilde S_K&:=(H_{0}+\varepsilon)^{-1/2}(H+K+\varepsilon)(H_{0}+\varepsilon)^{-1/2},\\
	 \mathcal D(\tilde S_K)&:=\{\phi\in \mathcal H:\,(H_{0}+\varepsilon)^{-1/2}\phi\in \mathcal D(H)\}.
\end{align*}
Indeed, for $\phi=(H_0+\varepsilon)^{1/2}f\in\mathcal D(\tilde S_K)$ with $f\in\mathcal D(H)$ and $\psi=(H_0+\varepsilon)^{1/2}g\in\mathcal{H}$ with $g\in\mathcal{D}((H_{0}+\varepsilon)^{1/2})$ we have
\begin{align*}
	\langle \tilde S_K \phi,\psi\rangle=\langle (H+K+\varepsilon)f,g\rangle&=
	\langle{H_{0}^{1/2}f,H_{0}^{1/2}g}\rangle+\langle (\varepsilon+K)f,g\rangle+\langle W_0f,Wg\rangle \\
	&=\langle (I+S_K)\phi,\psi\rangle,
\end{align*}
and hence the claim follows due to the density of $\mathcal D(\tilde S_K)\subset\mathcal H$.
Here we have used that,
by the first and second  representation theorems \cite[Thms.~VI.2.1, 2.23]{kato1976perturbation},
$\mathcal D(H)\subset \mathcal D(H_0^{1/2})=\mathcal D((H_0+\varepsilon)^{1/2})$ and $\mathcal D(H)$ is a core of $(H_0+\varepsilon)^{1/2}$.
\end{remark}

\begin{proof}[Proof of Lemma~\ref{lem:geometric bd}]

Consider a finite collection of eigenvalues  $\lambda_{1},\dots,\lambda_{N}$ (repeated according to their {geometric} multiplicities) of $H+K$, such that $\Re\lambda_j + \alpha\Im\lambda_{j} <-\varepsilon$ for all $j=1,\dots,N$.  Then there exists a linearly independent set $\{u_{j}:\,j=1,\dots,N\}$ where $u_{j}$ is an eigenvector corresponding to the eigenvalue $\lambda_j$.  

Let $\phi_{j}:=(H_{0}+\varepsilon)^{1/2}u_{j}$, $j=1,\dots,N$. Due to the first representation theorem \cite[Thm.~VI.2.1]{kato1976perturbation}, $\mathcal{D}(H)\subset\mathcal{D}(H_{0}^{1/2})$, hence the $\phi_{j}$ are well-defined and belong to $\mathcal{D}(\tilde S_K)$. Since the $u_{j}$, $j=1,\dots,N$, are linearly independent, so are the $\phi_{j}$, $j=1,\dots,N$. 
Let 
$\Psi=\phi_{1}\wedge\cdots\wedge\phi_{N}\in\wedge^N\mathcal H$.
 Then $\Psi\not\equiv 0$. In addition, a simple calculation using $(H+K+\varepsilon)u_j=(\lambda_j+\varepsilon)u_j$ shows that
 $$ \tilde S_K^{(N)} \Psi= 
	\left(\sum_{j=1}^{N} (\lambda_{j}+\varepsilon)\right) 
	R^{(N)} \Psi,$$
where we have set $R:=(H_{0}+\varepsilon)^{-1}$ and lifted it to the linear operator $R^{(N)}$ in 
$\wedge^N\mathcal H$. Hence
\begin{equation}\label{eq:crucial}
	\langle{ \tilde S_K^{(N)} \Psi, \Psi }\rangle = 
	\left(\sum_{j=1}^{N} (\lambda_{j}+\varepsilon)\right) 
	\langle{ R^{(N)} \Psi, \Psi }\rangle.
\end{equation}
Since the operator $R=(H_{0}+\varepsilon)^{-1}$ is non-negative, Lemma~\ref{lem:antisymmprop}~iv) implies that
$\langle{ R^{(N)} \Psi, \Psi }\rangle\geq  0$.
Now taking $(\Re+\alpha\Im)$ on both sides of \eqref{eq:crucial}, and using that $\Re\lambda_{j} + \alpha\Im\lambda_{j}<-\varepsilon$ for $j=1,\dots,N$, results in
\begin{equation}\label{eq:ReIm}
	\Re\langle{ \tilde S_K^{(N)} \Psi, \Psi }\rangle+
	\alpha \Im\langle{ \tilde S_K^{(N)} \Psi, \Psi }\rangle \leq 0.
\end{equation}
By Remark~\ref{rem:closure}, we have $\tilde S_K=S_K+I$ on $\mathcal D(\tilde S_K)$.
This implies $\tilde S_K^{(N)}\Psi=S_K^{(N)}\Psi+N\Psi$, hence
$$\langle S_K^{(N)}\Psi,\Psi\rangle=\langle \tilde S_K^{(N)}\Psi,\Psi\rangle-N\|\Psi\|^2.$$
Therefore, setting $T:=-\Re S_K-\alpha\Im S_K$ and using \eqref{eq:ReIm} implies
$$\langle T^{(N)}\Psi,\Psi\rangle
=-\Re \langle S_K^{(N)}\Psi,\Psi\rangle-\alpha \Im \langle S_K^{(N)}\Psi,\Psi\rangle
\geq N \|\Psi\|^2.$$
Now the claim follows from Lemma~\ref{lem:antisymmprop}~v).
\end{proof}

\subsection{Perturbation result to break up Jordan chains}

Let $T$ be a linear operator in $\mathcal{H}$.
Consider a finite collection of eigenvalues $\lambda_{1},\dots,\lambda_{N}\in\sigma_{\rm d}(T)$, with each $\lambda_{j}$  repeated according to its algebraic multiplicity. Then one can find an $N$-dimensional invariant subspace $\mathcal V\subset\mathcal{H}$ under $T$ corresponding to $\lambda_{1},\dots,\lambda_{N}$.
We take a basis of $\mathcal V$ according to the Jordan normal form representing $T$ on the subspace $\mathcal V$. Suppose that the matrix representation has $k\leq N$ Jordan blocks, labelled $n = 1, \dots, k$, one for each distinct eigenvalue $\lambda_{n}$, whose algebraic multiplicity is denoted by $m_{n}$. Note that $\sum_{n=1}^{k} m_{n}=N$. 
For each $n$, let $f_{n,1}, \dots, f_{n,m_{n}}$ be a Jordan chain of length $m_{n}$, i.e.\
\begin{align*}
	(T - \lambda_{n}) f_{n,1} &= 0,\\
	(T - \lambda_{n}) f_{n,\ell} &= f_{n ,\ell-1},\quad \ell=2,\dots, m_{n}.
\end{align*}
Now we define a bounded finite-rank operator $K_0: \mathcal H \to \mathcal H$ 
that acts independently on each Jordan chain as
\begin{align*}
	K_0 f_{n,1} &= f_{n,m_{n}},\\
	K_0 f_{n,\ell} &= 0, \quad  \ell = 2, \dots, m_{n},
\end{align*}
and on the orthogonal complement of $\mathcal V$ we set $K_0|_{\mathcal V^{\perp}}=0$. 

Before we come to the next main ingredient in our proof, we recall that an eigenvalue of a linear operator is called \emph{semisimple} if its algebraic and geometric multiplicities coincide. 

Now we perturb $T$ in such a way that leaves $\mathcal V$ invariant and renders all eigenvalues semisimple.

\begin{lemma}\label{lem:semisimple}
For any $\delta> 0$, all eigenvalues of $T + \delta K_0$ on $\mathcal V$ are semisimple and given by
\[
	z_{n,\ell} = \lambda_{n} -((-1)^{m_{n}} \delta)^{1/m_{n}} {\rm e}^{2 \pi {\rm i} \ell / m_{n}}
\]
for $ \ell = 1, \dots, m_{n}$ and $ n=1,\dots,k$.
\end{lemma}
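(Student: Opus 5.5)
The plan is to reduce everything to a finite-dimensional computation on each Jordan chain. First I will check that $T+\delta K_0$ leaves $\mathcal V$ invariant and that each chain span $\mathcal V_n:=\operatorname{span}\{f_{n,1},\dots,f_{n,m_n}\}$ is invariant. This holds because $T$ maps $\mathcal V_n$ into itself and $K_0$ maps $\mathcal V_n$ into $\operatorname{span}\{f_{n,m_n}\}\subset\mathcal V_n$. Since $\mathcal V=\bigoplus_n \mathcal V_n$, the spectrum of $(T+\delta K_0)|_{\mathcal V}$ is the union of the spectra of the restrictions to the $\mathcal V_n$. Moreover, an eigenvalue of the restriction to $\mathcal V$ is semisimple as soon as $(T+\delta K_0)|_{\mathcal V}$ is diagonalisable. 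So it suffices to show that each block restriction is diagonalisable with the stated eigenvalues, and that eigenvalues coming from different blocks are handled consistently. Diagonalisability of the direct sum then follows even if eigenvalues from different blocks coincide, since a direct sum of diagonalisable maps is diagonalisable.

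Next I fix $n$, write $m=m_n$, and work in the basis $f_{n,1},\dots,f_{n,m}$. The matrix of $T$ is $\lambda_n I+J$, where $J$ is the nilpotent shift with $Jf_{\ell}=f_{\ell-1}$ and $Jf_1=0$. The matrix of $\delta K_0$ has the single entry $\delta$ in position $(m,1)$, because it sends $f_1\mapsto\delta f_m$. Hence $B:=J+\delta K_0$ acts as the cyclic weighted shift $f_\ell\mapsto f_{\ell-1}$ for $\ell\ge2$ and $f_1\mapsto \delta f_m$. This is a companion matrix and satisfies $B^m=\delta I$; indeed, iterating the shift $m$ times on any $f_\ell$ returns $\delta f_\ell$. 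For $m=1$ we have $B=\delta$ directly.

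The polynomial $x^m-\delta$ has $m$ distinct roots since $\delta>0$, and $B$ is annihilated by it, so $B$ is diagonalisable. Because $B$ is a companion matrix, its characteristic polynomial is exactly $x^m-\delta$, so every $m$-th root of $\delta$ occurs exactly once. Alternatively, one can exhibit the eigenvectors $\sum_\ell \mu^{\ell} f_\ell$ directly. The eigenvalues of $T+\delta K_0$ on $\mathcal V_n$ are therefore $\lambda_n+\omega$ with $\omega^m=\delta$.

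Finally I will match these with the stated formula $z_{n,\ell}=\lambda_n-((-1)^m\delta)^{1/m}e^{2\pi i\ell/m}$. We have $(-\omega)^m=(-1)^m\delta$, so $-\omega$ ranges over the $m$ distinct $m$-th roots of $(-1)^m\delta$, which are $((-1)^m\delta)^{1/m}e^{2\pi i\ell/m}$ for $\ell=1,\dots,m$ with any fixed choice of principal root. This gives exactly the set $\{z_{n,\ell}\}$.

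The main obstacle I anticipate is this sign and root-convention bookkeeping: confirming that the Jordan-chain orientation $(T-\lambda_n)f_\ell=f_{\ell-1}$ produces $+\delta$ rather than $-\delta$ in $B^m$. The other point needing care is semisimplicity when $z_{n,\ell}=z_{n',\ell'}$ for $n\neq n'$, which the direct-sum diagonalisability argument above covers.
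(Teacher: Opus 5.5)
Your proposal is correct and follows essentially the same route as the paper. Both reduce to the block-diagonal matrix on each Jordan chain and identify its characteristic polynomial as $(\lambda_n-z)^{m_n}-(-1)^{m_n}\delta$, whose roots are distinct. Your identity $B^{m}=\delta I$ and the direct-sum argument for eigenvalues from different blocks coinciding only make explicit the semisimplicity step that the paper leaves implicit.
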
 

\begin{proof}
For $n =1,\dots,k$, the $m_{n} \times m_{n}$ matrix representation of $T|_{\mathcal V}$ in the $n$-th block  with respect to the basis $\{f_{n,1}, \dots, f_{n ,m_{n}}\}$ is given by $\lambda_{n} I + N_{n}$ where
\[
	N_{n} :=
	\begin{pmatrix}
		0 & 1 & 0 & \dots & 0 \\
		0 & 0 & 1 & \ddots & \vdots \\
		0 & 0 & 0 & \ddots & 0 \\
		\vdots & & & \ddots & 1 \\
		0 & 0 & 0 & \dots & 0
	\end{pmatrix}.
\]
Since  $K_0$ acts independently on each chain, the matrix representation of $K_0|_{\mathcal V}$ is block-diagonal with the $n$-th block $M_{n}$ given by the $m_{n}\times m_{n}$ matrix
\[
	M_{n}:=
	\begin{pmatrix}
		0 & 0 & \dots&  & 0 \\
		0 & 0 &  &  & 0 \\
		\vdots & & \ddots & & \vdots \\
		0 & 0 &  &  & 0 \\
		1 & 0 &\dots  &  & 0 
	\end{pmatrix}.
\]
Therefore, the matrix representation of $T + \delta K_0$ on $\mathcal V$ is also block-diagonal. Further, it can be seen that the characteristic polynomial of the $n$-th block is
\[
	\det(\lambda_{n} I+ N_{n} + \delta M_{n} - z I) = (\lambda_{n}-z)^{m_{n}} - (-1)^{m_{n}}\delta,
\]
and its zeros are exactly the claimed eigenvalues $z_{n, \ell}$. This implies the claim.
\end{proof}

\subsection{Proof of the abstract eigenvalue counting estimate}

Now we combine Lemmas~\ref{lem:geometric bd} and~\ref{lem:semisimple} to prove the main result of this section.
To this end, we first show that Lemma~\ref{lem:geometric bd} continues to hold for every finite family of eigenvalues of $H+K$ repeated according to their \emph{algebraic} multiplicities. 

\begin{proposition}\label{prop:algebraic bd}
	Let $K:\mathcal{H}\to\mathcal{H}$ be a bounded linear operator, and let $\varepsilon>0$ and $\alpha\in\R$.
	Then, for any $N\in\N$ such that there exist $\lambda_{1},\dots,\lambda_{N}\in\sigma_{\rm d}(H+K)$ (repeated according to their algebraic multiplicities) with $\Re\lambda_{j} + \alpha\Im\lambda_{j}<-\varepsilon$ for all $j=1,\dots,N$,
	\begin{align*}
		N\leq \sum_{j=1}^{N}E_j\left(-\Re S_K-\alpha\Im S_K\right),
	\end{align*}
	where $S_K:=(W(H_{0}+\varepsilon)^{-1/2})^{*}(W_{0}(H_{0}+\varepsilon)^{-1/2})+(H_{0}+\varepsilon)^{-1/2}K(H_{0}+\varepsilon)^{-1/2}$ is a bounded operator on $\mathcal H$.
\end{proposition}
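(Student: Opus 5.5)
The idea is to reduce to Lemma~\ref{lem:geometric bd} by a small finite-rank perturbation that splits every Jordan chain into semisimple eigenvalues, and then to let the perturbation go to zero. Fix the eigenvalues $\lambda_1,\dots,\lambda_N$ of $H+K$, repeated according to algebraic multiplicity. Let $\mathcal V$ be the corresponding $N$-dimensional invariant subspace of $T:=H+K$, spanned by Jordan chains $f_{n,\ell}$, and let $K_0$ be the finite-rank operator constructed before Lemma~\ref{lem:semisimple}.

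For $\delta>0$ we apply Lemma~\ref{lem:semisimple}. On $\mathcal V$, the operator $H+K+\delta K_0$ has the $N$ eigenvalues $z_{n,\ell}$, counted with multiplicity, and all of them are semisimple. This needs a short check that $\mathcal V$ is still invariant and that the $z_{n,\ell}$ really are eigenvalues of the full operator, and not only of its restriction to $\mathcal V$. This holds because $\mathcal V\subset\mathcal D(H)$, $\mathcal V$ is invariant under $K_0$, and eigenvectors of the restriction are eigenvectors of the full operator.

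Semisimplicity on $\mathcal V$ gives $N$ linearly independent eigenvectors in $\mathcal V$. Lemma~\ref{lem:geometric bd} is stated in terms of geometric multiplicities, but its proof only uses a linearly independent family of eigenvectors, so these $N$ eigenvectors suffice. We also need the $z_{n,\ell}$ to lie in the discrete spectrum. They do, because $K_0$ is finite rank, so the essential spectrum is unchanged and equals $[0,\infty)$ or a subset of it, while the $z_{n,\ell}$ lie in the open half-plane. They lie there since $|z_{n,\ell}-\lambda_n|=\delta^{1/m_n}\to0$ and the condition $\Re z+\alpha\Im z<-\varepsilon$ is open. So for small $\delta$ all $z_{n,\ell}$ satisfy the same strict inequality.

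Lemma~\ref{lem:geometric bd}, applied with the bounded perturbation $K+\delta K_0$, then gives
\[
N\le\sum_{j=1}^N E_j\bigl(-\Re S_{K+\delta K_0}-\alpha\Im S_{K+\delta K_0}\bigr),
\]
where $S_{K+\delta K_0}=S_K+\delta (H_0+\varepsilon)^{-1/2}K_0(H_0+\varepsilon)^{-1/2}$. The selfadjoint operators inside the bracket differ from $T_0:=-\Re S_K-\alpha\Im S_K$ by a perturbation of norm at most $\delta(1+|\alpha|)\|K_0\|/\varepsilon$. The Ky Fan partial sums $\sum_{j\le N}E_j(\cdot)$ are variational (Remark after Lemma~\ref{lem:antisymmprop}), so they are Lipschitz in operator norm with constant $N$. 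This also holds when some $E_j$ equals $\max\sigma_{\rm e}$, by the min-max characterisation. Letting $\delta\to0$ yields the claim.

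The main obstacle is checking that Lemma~\ref{lem:geometric bd} applies to the perturbed operator. This requires $H+K+\delta K_0$ to have the form covered by the lemma, namely $H$ plus a bounded operator, which it does. It also requires the relevant eigenvectors to be linearly independent, which follows from the Jordan/semisimple structure on $\mathcal V$. A further point of care is continuity of $E_j$ under norm perturbation when fewer than $N$ eigenvalues lie above the essential spectrum; the min-max characterisation settles this.
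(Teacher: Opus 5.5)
Your argument is the paper's proof: perturb by $\delta K_0$ via Lemma~\ref{lem:semisimple}, apply Lemma~\ref{lem:geometric bd} with $K+\delta K_0$, and let $\delta\to0$. Two of your choices are, if anything, slightly more careful than the paper. You work with $N$ linearly independent eigenvectors in $\mathcal V$ rather than ``$N$ simple eigenvalues'', which can fail when two Jordan blocks share an eigenvalue. You also use the $N$-Lipschitz bound for $\sum_{j\le N}E_j$ rather than citing Kato for continuity of each $E_j$.

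There is one slip. For a merely bounded $K$, the set $\sigma_{\rm e}(H+K+\delta K_0)=\sigma_{\rm e}(H+K)$ need not lie in $[0,\infty)$. For example, $K=-cI$ shifts it to $\sigma_{\rm e}(H)-c$, which for $H_0=-\Delta$ and $c>\varepsilon$ meets the half-plane. There are two easy fixes:
\begin{itemize}
\item use that $\lambda_n\notin\sigma_{\rm e}(H+K)$, a closed set, so $z_{n,\ell}\notin\sigma_{\rm e}$ for small $\delta$; or
\item note, as you already observe, that the proof of Lemma~\ref{lem:geometric bd} only uses linearly independent eigenvectors, so discreteness of the $z_{n,\ell}$ is not needed at all.
\end{itemize}
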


\begin{proof}
	Let $\varepsilon>0$, $\alpha\in\R$ and  let $\lambda_{1},\dots,\lambda_{N}$ be eigenvalues of $H+K$ such that
	\[
		\Re\lambda_{j} + \alpha\Im\lambda_{j}<-\varepsilon
	\]
	for all $j=1,\dots,N$. Then, in view of Lemma~\ref{lem:semisimple} applied to $T=H+K$, for any $\delta>0$ sufficiently small we obtain a finite family of $N$ simple eigenvalues $z_{j}$ of the perturbed operator $H+K+\delta K_0$ such that $\Re z_{j} + \alpha\Im z_{j}<-\varepsilon$.
	
	Lemma~\ref{lem:geometric bd} with $K$ replaced by $K+\delta K_0$ implies
	\[
		N\leq \sum_{j=1}^{N}E_j\left(-\Re S_{K+\delta K_0}-\alpha\Im S_{K+\delta K_0}\right).
	\]
	Since $K_0$  and $(H_0+\varepsilon)^{-1/2}$ are bounded operators, we have the operator norm convergence
	\[
		\Re S_{K+\delta K_{0}} + \alpha\Im S_{K+\delta K_{0}}\to  \Re S_{K}+\alpha\Im S_{K}
	\]
	as $\delta\to 0$. Therefore, by \cite[Sect.~IV.3.5]{kato1976perturbation}, each eigenvalue $E_j(-\Re {S_{K}}-\alpha\Im {S_{K}})$ is the limit of a sequence of $E_j\left(-\Re S_{K+\delta K_{0}}-\alpha\Im S_{K+\delta K_{0}}\right)$ as $\delta\to 0$. This proves the assertion.
\end{proof}

Now everything is in place to prove the main result.

\begin{proof}[Proof of Theorem~\ref{thm:B-S}]
	The estimate follows by setting $K=0$ in Proposition~\ref{prop:algebraic bd}.
\end{proof}

\section{Applications to non-selfadjoint Schr\"{o}dinger operators}

In this section, we present applications of Theorem~\ref{thm:B-S} to Schr\"{o}dinger operators with complex potentials  in the Hilbert space $L^{2}(\R^{d})$.
In Theorem~\ref{thm:CLR-complex} we generalise the CLR inequality to complex potentials. We, then, prove new LT inequalities for sums over eigenvalues in half-planes in Theorem~\ref{thm:real part LT est} and for sums over \emph{all} discrete eigenvalues in Theorem~\ref{thm:LT complex-B}. Since eigenvalues of Schr\"{o}dinger operators with complex potentials may accumulate at any point in the positive real axis, Theorem~\ref{thm:LT complex-B} provides quantitative information on the accumulation rate, see Remark~\ref{rem:accumulation rate}. 

As in \eqref{eq:gamma}, we assume that 
$$\gamma\geq 1/2\quad (d=1), \quad \gamma>0\quad  (d=2), \quad \gamma\geq 0 \quad (d\geq 3).$$
Then, by \cite[Lem.~4.2]{fra_18}, for any function $W\in L^{d+2\gamma}(\R^{d})$ and any $\varepsilon>0$ the operator $W(-\Delta+\varepsilon)^{-1/2}$ is compact.
Let $V\in L^{d/2+\gamma}(\R^d)$ be complex-valued. Then the assumptions of the previous section are satisfied if we set $\mathcal{H}=\mathcal G=L^{2}(\R^{d}),\,H_{0}=-\Delta,\,W=\sqrt{|V|}$ and $W_{0}=\sqrt{V}$ where $\sqrt{V(x)}:=V(x)/\sqrt{|V(x)|}$ if $V(x)\neq 0$ and $\sqrt{V(x)}:=0$ if $V(x)=0$. The generated $m$-sectorial operator $H=-\Delta+V$ is the Schr\"{o}dinger operator with  potential $V$. Its essential spectrum is $\sigma_{\rm e}(H)=\sigma_{\rm e}(H_0)=[0,\infty)$.

\subsection{CLR inequalities for complex potentials}

In the following result we derive bounds on $N(\Re\lambda+\alpha\Im\lambda<-\varepsilon; -\Delta+V)$ in terms of $\varepsilon$ and the $L^{\gamma+d/2}(\R^d)$-norm of $(\Re V+\alpha\Im V)_-$. In particular, the case $d\geq 3$
and  $\gamma=0$  is the generalisation of the CLR inequality to complex potentials.
The inequality for general $d$ and $\gamma$  is the generalisation of e.g.~\cite[Eq.~(2.8)]{LTdetailed}.

\begin{theorem}\label{thm:CLR-complex}
	Let $p=d/2+\gamma$ with $\gamma$ satisfying \eqref{eq:gamma}.
	Then there exists a constant $C_{d,p}>0$ such that for all $V\in L^{p}(\R^d)$, $\alpha\in\R$ and $\varepsilon>0$,
	$$N(\Re\lambda+\alpha\Im\lambda<-\varepsilon; -\Delta+V)\leq C_{d,p}\,\varepsilon^{-\gamma} \int_{\R^d} (\Re V(x)+\alpha\Im V(x))_-^{p}\,\dd x.$$
	In particular, for $d\geq 3$ and $\gamma=0$ we have, for all $V\in L^{d/2}(\R^d)$ and $\alpha\in\R$,
	$$N(\Re\lambda+\alpha\Im\lambda<0; -\Delta+V)\leq C_{d,d/2}\,\int_{\R^d} (\Re V(x)+\alpha\Im V(x))_-^{d/2}\,\dd x.$$
The constants can be taken as
$$C_{d,p}=
\begin{cases}
1/2 &\text{if } d=1, \,p=1,\\
2^{p+\gamma-4}\pi^{-2p} \tau_d\, d^{d/2} \gamma^{\gamma} p^{p+1} (p-1)^{1-2p} &\text{if } p>\max\{1,d/2\},\\
2^{-5} \pi^{-d} \tau_d\, d^{d+1}(d/2-1)^{1-d} &\text{if } d\geq 3, \,p=d/2,
\end{cases}$$
where $\tau_d$ denotes the volume of the unit ball in $\R^d$.

\end{theorem}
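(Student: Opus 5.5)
We will apply Theorem~\ref{thm:B-S} with $H_0=-\Delta$, $W=\sqrt{|V|}$, $W_0=\sqrt V$. In this case $S=(-\Delta+\varepsilon)^{-1/2}V(-\Delta+\varepsilon)^{-1/2}$, understood as the bounded closure. Writing $U:=\Re V+\alpha\Im V$, which is real-valued, we have
$$\Re S+\alpha\Im S=(-\Delta+\varepsilon)^{-1/2}U(-\Delta+\varepsilon)^{-1/2}\geq -(-\Delta+\varepsilon)^{-1/2}U_-(-\Delta+\varepsilon)^{-1/2}=:-B.$$
By the min-max principle, $E_j(-\Re S-\alpha\Im S)\leq E_j(B)$. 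For $j$ beyond the number of positive eigenvalues the essential supremum is $0$ on both sides, so the comparison still holds. Hence Theorem~\ref{thm:B-S} gives
$$N\leq \sum_{j=1}^N E_j(B)\qquad\text{for every admissible } N.$$
Here $B=K^*K$ with $K=U_-^{1/2}(-\Delta+\varepsilon)^{-1/2}$, which is a compact Birman--Schwinger operator of a real potential. It suffices to treat finite $N$ and bound it uniformly; the bound then also rules out infinitely many eigenvalues.

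Next, we will not bound the full trace, since $B$ need not be trace class when $\gamma$ is small. Instead we use a splitting. For any $s>0$,
$$\sum_{j=1}^N E_j(B)\leq Ns+\sum_j (E_j(B)-s)_+ .$$
With $s=1/2$ this gives
$$N\leq 2\sum_j\bigl(E_j(B)-\tfrac12\bigr)_+\leq 2\,{\rm Tr}(B-\tfrac12)_+ .$$
To control the right-hand side we use the Cwikel-type (or Lieb-type) bound on the number of singular values of $K$ exceeding a threshold. By $\mathrm{Tr}(B-s)_+=\int_s^\infty n(t;B)\,\dd t$ it is enough to have
$$n(t;B)\leq C\,t^{-p}\varepsilon^{-\gamma}\|U_-\|_p^p.$$
This estimate follows from the weak Schatten bound $\|f(x)g(-\ii\nabla)\|_{\mathcal S_{r,w}}\lesssim \|f\|_r\|g\|_{L^{r,w}}$ for $r>2$, applied with $f=U_-^{1/2}$ and $g=(|\xi|^2+\varepsilon)^{-1/2}$. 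By scaling, the $\varepsilon$ dependence is $\varepsilon^{-\gamma}$. Integrating in $t$ from $1/2$ gives a finite integral when $p>1$, and hence $N\leq C_{d,p}\varepsilon^{-\gamma}\int U_-^p$.

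For the borderline case $d\geq 3$, $p=d/2$, the trace of $(B-\tfrac12)_+$ may not be controlled by the weak bound alone. Instead, we bound $E_j(B)$ directly, using that singular values are weakly $L^{2p}$: $E_j(B)\leq C j^{-1/p}\|U_-\|_p$. Summing gives
$$\sum_{j\leq N}E_j(B)\leq C' N^{1-1/p}\|U_-\|_p .$$
Solving $N\leq C'N^{1-1/p}\|U_-\|_p$ for $N$ yields $N\leq C'^p\|U_-\|_p^p$. This route also works for $p>1$ in general and is the one I would use to obtain the stated constants, with $\varepsilon$ scaling giving the factor $\varepsilon^{-\gamma}$. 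The case $d=1$, $p=1$ is handled by the explicit kernel of $(-\partial^2+\varepsilon)^{-1}$, namely $\e^{-\sqrt\varepsilon|x-y|}/(2\sqrt\varepsilon)$. This gives ${\rm Tr}B=\int U_-/(2\sqrt\varepsilon)$, and using $\sum_{j\le N}E_j(B)\leq{\rm Tr}B$ yields the constant $1/2$. Finally, letting $\varepsilon\to0$ gives the $\gamma=0$ statement by monotonicity in $\varepsilon$.

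The main obstacle is tracking explicit constants in the Cwikel-type singular value estimate. I would take the explicit Cwikel constants from the literature, for example Frank's textbook version giving $n(t)\le C t^{-p}\ldots$, and then optimise the threshold parameter, which is where factors such as $(p-1)^{1-2p}$ would appear.
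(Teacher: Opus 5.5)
Your reduction is the paper's. You apply Theorem~\ref{thm:B-S}, then compare with $B=K^*K$, $K=U_-^{1/2}(-\Delta+\varepsilon)^{-1/2}$; you use monotonicity of each $E_j$ under $-\Re S-\alpha\Im S\le B$, while the paper uses Ky Fan's principle for partial sums, and both are fine. Then come a Cwikel bound, solving for $N$, and the Hilbert--Schmidt computation ${\rm Tr}\,B=\tfrac12\varepsilon^{-1/2}\int U_-$ for $d=1$, $p=1$, which gives exactly $1/2$. So the inequality with \emph{some} constant, and its $\gamma=0$ limit, are correctly established.

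What you do not obtain are the explicit constants stated for $p>1$. Both of your routes pass through individual singular values (or the counting function) and then resum, and this loses a factor. Let $A:=K_{2p}\|u\|_{L^{2p,\infty}}\|U_-^{1/2}\|_{L^{2p}}$, with $u(\xi)=(2\pi)^{-1}(|\xi|^2+\varepsilon)^{-1/2}$ and $K_q$ Cwikel's constant. The bound $s_j(K)^2\le j^{-1/p}A^2$ gives $\sum_{j\le N}s_j(K)^2\le \frac{p}{p-1}N^{1-1/p}A^2$, hence only $N\le (p/(p-1))^pA^{2p}$. Your splitting with $n(t;B)\le A^{2p}t^{-p}$, optimised at $s=(p-1)/p$, gives exactly the same loss. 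The stated $C_{d,p}$ is $A^{2p}/(\varepsilon^{-\gamma}\int U_-^p)$, with no factor $(p/(p-1))^p$.

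The missing point is that Cwikel's estimate is proved for the equivalent quasi-norm $\sup_k k^{1/q}\bigl(\frac1k\sum_{j\le k}s_j(B_{u,g})^2\bigr)^{1/2}$. This controls precisely the partial sum that Theorem~\ref{thm:B-S} produces: $\sum_{j\le N}s_j(K)^2\le N^{1-1/p}A^2$, so $N\le A^{2p}$ directly. Consequently the factor $(p-1)^{1-2p}$ does not come from optimising a threshold; it is part of $K_{2p}^{2p}=4^{2p-2}p^{2p+1}(p-1)^{1-2p}$. The rest of $C_{d,p}$ comes from the exact value $\|u\|_{L^{2p,\infty}}^{2p}=\tau_d(2\pi)^{-2p}2^{-d/2}\gamma^{\gamma}p^{-p}d^{d/2}\varepsilon^{-\gamma}$, obtained by maximising $t^{2p}|\{u>t\}|$. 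Here $\gamma^\gamma=1$ for $\gamma=0$, in which case the supremum is attained as $t\to0$.

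Also, your worry about the borderline case is unfounded. For $d\ge3$ you have $p=d/2\ge3/2>1$, so $\int_{1/2}^\infty t^{-p}\,\dd t<\infty$ and the splitting works there as well. The only admissible case with $p\le1$ is $d=1$, $p=1$, which you treat separately.
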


\begin{proof}
	Let $\varepsilon>0$, $\alpha\in\R$ and $V\in L^{p}(\R^d)$. Let $N\in\N$ be such that there are $\lambda_1,\dots,\lambda_N\in\sigma_{\rm d}(-\Delta+V)$ (repeated according to their algebraic multiplicities) with $\Re\lambda_j+\alpha\Im\lambda_{j}<-\varepsilon$ for all $j=1,\dots,N$. 
	Due to Theorem~\ref{thm:B-S}, with $S=(\sqrt{|V|}(-\Delta+\varepsilon)^{-1/2})^{*}(\sqrt{V}(-\Delta+\varepsilon)^{-1/2})$,
	\[
	N\leq\sum_{j=1}^{N} E_{j}(-\Re S-\alpha\Im S).
	\]
	By \cite[Prop~1.33]{frank2022schrodinger},
	$$\sum_{j=1}^{N} E_{j}(-\Re S-\alpha\Im S)
	=\max_{\{e_{j}\}_{j}} \,\sum_{j=1}^{N}\langle{(-\Re S-\alpha\Im S)e_{j},e_{j}}\rangle,$$
	where the maximum is taken over any finite family of orthonormal sequences $\{e_{j}\}_{1\leq j\leq N}\subset\mathcal{D}(S)=\mathcal{H}$;
	if $E_{j}(-\Re S-\alpha\Im S)$ terminates at $j_{0}<N$, i.e. $E_{j}(-\Re S-\alpha\Im S)=0$ for $j=j_{0}+1,\dots,N$, then the maximum is taken over any finite orthonormal family $\{e_{j}\}_{1\leq j\leq j_{0}}$ instead.
	Note that
	\begin{align*}
		\langle{Se_{j},e_{j}}\rangle&=\langle{\sqrt{V}(-\Delta+\varepsilon)^{-1/2}e_{j},\sqrt{|V|}(-\Delta+\varepsilon)^{-1/2}e_{j}}\rangle \\
		&=\int_{\R^d} V(x)\left|((-\Delta+\varepsilon)^{-1/2}e_{j})(x)\right|^2\,{\rm d}x.
	\end{align*}
	Similarly, one has
	\[
	\langle{S^{*}e_{j},e_{j}}\rangle=\langle{e_{j},S e_{j}}\rangle=
	\int_{\R^d} \overline{V(x)}\left|((-\Delta+\varepsilon)^{-1/2}e_{j})(x)\right|^2\,{\rm d}x.
	\]
	Hence, for each $j=1,\dots,N$,
	\begin{align*}
		&\langle{(-\Re S-\alpha\Im S)e_{j},e_{j}}\rangle \\
		&=\int_{\R^d} (-\Re V(x)-\alpha\Im V(x))\left|((-\Delta+\varepsilon)^{-1/2}e_{j})(x)\right|^2\,{\rm d}x \\
		&\leq \int_{\R^d} (\Re V(x)+\alpha\Im V(x))_- \left|((-\Delta+\varepsilon)^{-1/2}e_{j})(x)\right|^2\,{\rm d}x \\
		&=\|(\Re V+\alpha\Im V)^{1/2}_{-}(-\Delta+\varepsilon)^{-1/2}e_{j}\|^2 \\
		&=\langle{(-\Delta+\varepsilon)^{-1/2}(\Re V+\alpha\Im V)_{-}(-\Delta+\varepsilon)^{-1/2}e_{j},e_{j}}\rangle.
	\end{align*}
	Applying \cite[Prop~1.33]{frank2022schrodinger} once more yields
	\begin{equation}\label{eq:N-singular Schrodinger}
		\begin{aligned}
			N&\leq \sum_{j=1}^{N} E_j\left((-\Delta+\varepsilon)^{-1/2}(\Re V+\alpha\Im V)_{-}(-\Delta+\varepsilon)^{-1/2}\right)\\
			&= \sum_{j=1}^{N} s_j ((\Re V+\alpha\Im V)_-^{1/2}(-\Delta+\varepsilon)^{-1/2})^2,
		\end{aligned}
	\end{equation}
	where $s_j(T)$ denotes the $j$-th singular value of an operator $T$, arranged in non-increasing order and repeated according to its multiplicity; we use the convention that if there are fewer than $j$ positive singular values of $T$, we set $s_j (T)=0$.
	
	In the following we use the results of  Cwikel \cite{cwikel1977weak} on weak Lebesgue spaces and weak Schatten class operators. We recall that, for $1\leq q<\infty$, a measurable function $u$ belongs to the weak Lebesgue space $L^{q,\infty}(\R^d)$ if the quasi-norm
	\begin{equation}\label{eq:defweak}
		\|u\|_{L^{q,\infty}(\R^d)}:=\sup_{t>0}\left(t^q |\{x\in\R^d:\,|u(x)|>t\}|\right)^{1/q}
	\end{equation}
	is finite. Analogously, the weak Schatten class $\mathfrak S_{q,\infty}$ consists of all compact operators $A$ in the Hilbert space $L^2(\R^d)$ for which the weak Schatten quasi-norm 
	$$\|A\|_{\mathfrak S_{q,\infty}}:=\sup_{t>0}\left(t^q |\{j\in\N:\,|s_j(A)|>t\}|\right)^{1/q}=\sup_{j\in\N}\left(j^{1/q}s_j(A)\right)$$ 
	is finite; by \cite[p.~9]{simon2005trace} applied to $a_j=s_j (A)^2$ and $p=q/2$, for $2<q<\infty$ the weak Schatten quasi-norm is equivalent to
	$$\sup_{k\in\N}\left(k^{1/q}\left(\frac{1}{k}\sum_{j=1}^k s_j (A)^2\right)^{1/2}\right).$$
	Define the integral operator $B_{u,g}$ in $L^2(\R^d)$ by
	$$(B_{u,g} f)(\xi):=u(\xi)\int_{\R^d} \exp(2\pi\ii \xi\cdot x)g(x)f(x)\,{\rm d}x, \quad \xi\in\R^d.$$
	Then  Cwikel \cite[Sect.~3]{cwikel1977weak} proved that, for $2<q<\infty$, the bilinear map $(u,g)\mapsto B_{u,g}$ is a bounded operator from $L^{q,\infty}(\R^d)\times L^q(\R^d)$ into $\mathfrak S_{q,\infty}$, and
	\begin{equation}\label{eq:Cwikel}
		\sup_{k\in\N}\left(k^{1/q}\left(\frac{1}{k}\sum_{j=1}^k s_j (B_{u,g})^2\right)^{1/2}\right)\leq K_q\, \|u\|_{L^{q,\infty}(\R^d)}\,\|g\|_{L^q(\R^d)}
	\end{equation}
	with the $q$-dependent constant
	$$K_q=\frac{q}{2} \left(\frac{4}{q/2-1}\right)^{1-2/q}\left(1+\frac{2}{q-2}\right)^{1/q}.$$
	
	Let $\mathcal{F}$ denote  the unitary Fourier transform on $L^{2}(\R^{d})$. Following Cwikel \cite{cwikel1977weak}, one can write $(\Re V+\alpha\Im V)_-^{1/2}(-\Delta+\varepsilon)^{-1/2}$ in terms of $B_{u,g}$ with a specific choice of $(u,g)$, namely,
	\begin{align*}
		(\Re V+\alpha\Im V)_-^{1/2}(-\Delta+\varepsilon)^{-1/2}
		&=((-\Delta+\varepsilon)^{-1/2}(\Re V+\alpha\Im V)_-^{1/2})^{*} \\
		&=(\mathcal{F}^{-1}B_{u,g})^{*}=B^{*}_{u,g}\mathcal{F}
	\end{align*}
	with 
	$$u(\xi):=\frac{1}{2\pi}(|\xi|^2+\varepsilon)^{-1/2}, \quad g(x)=(\Re V(x)+\alpha\Im V(x))_-^{1/2}.$$
	This yields
	\[
	s_j((\Re V+\alpha\Im V)_-^{1/2}(-\Delta+\varepsilon)^{-1/2})=s_{j}(B_{u,g}^*)=s_{j}(B_{u,g}).
	\]
	Now \eqref{eq:Cwikel} implies for $k\in\N$ and $2<q<\infty$ that
	$$k^{1/q}\left(\frac{1}{k}\sum_{j=1}^k s_j ((\Re V+\alpha\Im V)_-^{1/2}(-\Delta+\varepsilon)^{-1/2})^2\right)^{1/2}\leq K_q\, \|u\|_{L^{q,\infty}(\R^d)}\,\|g\|_{L^q(\R^d)}.$$
	Hence
	$$\sum_{j=1}^k s_j ((\Re V+\alpha\Im V)_-^{1/2}(-\Delta+\varepsilon)^{-1/2})^2
	\leq k^{1-2/q} \left(K_q \|u\|_{L^{q,\infty}(\R^d)}\,\|g\|_{L^q(\R^d)}\right)^2.$$
	Taking \eqref{eq:N-singular Schrodinger} into account and solving for  $k=N$ gives
	\begin{equation}\label{eq:Cwikelapplication}
		N \leq \left(K_q\, \|u\|_{L^{q,\infty}(\R^d)}\,\|g\|_{L^q(\R^d)}\right)^q
	\end{equation}
	where
	$$\|g\|_{L^q(\R^d)}^q=\int_{\R^d}(\Re V(x)+\alpha\Im V(x))_-^{q/2}\,\dd x.$$
	We want to apply the Cwikel estimate with $q=2p$ but the requirement $2<q<\infty$ means that this is allowed only for $p>1$; thus the case $d=1$, $p=1$ will be treated separately below.
	
	First we consider $p>1$ and set $q=2p$. To determine $\|u\|_{L^{q,\infty}(\R^d)}$ in \eqref{eq:defweak}, we shall note that $|u(\xi)|=\frac{1}{2\pi}(|\xi|^2+\varepsilon)^{-1/2}>t$ if and only if $|\xi|<((2\pi t)^{-2}-\varepsilon)^{1/2}$, which makes sense only for $t<\varepsilon^{-1/2}/(2\pi)$.
	Recall that  $\tau_d$ denotes the volume of the unit ball in $\R^d$.
	For $0<t<\varepsilon^{-1/2}/(2\pi)$ we define the function
	$$F(t):=t^{q}\,|\{\xi\in\R^d:\,|u(\xi)|>t\}|=\tau_d\, t^{q}((2\pi t)^{-2}-\varepsilon)^{d/2}.$$
	For $q>d$ the function attains its maximum
	at $t=\sqrt{q-d}/(2\pi  \sqrt{q \varepsilon})$, which yields
	\begin{align*}
		\|u\|_{L^{q,\infty}(\R^d)}
		&=\left(\sup_{0<t<\varepsilon^{-1/2}/(2\pi)} F(t)\right)^{1/q}
		=\frac{\tau_d^{1/q}}{2\pi} \,\sqrt{\frac{q-d}{q \varepsilon}}\, \left(\frac{d \varepsilon}{q-d}\right)^{d/(2q)} \\
		&=\frac{\tau_d^{1/q}}{2\pi} \frac{(q-d)^{1/2-d/(2q)}d^{d/(2q)}}{\sqrt{q}}\, \varepsilon^{-1/2+d/(2q)}.
	\end{align*}
	Note that $\varepsilon^{-1/2+d/(2q)}=\varepsilon^{-(q-d)/(2q)}$.
	For $q=d$ we have $F(t)=\tau_d\,(2\pi)^{-d}\, (1-(2\pi t)^2 \varepsilon)^{d/2}$, which attains its supremum in the limit $t\to 0$, hence
	$$\|u\|_{L^{d,\infty}(\R^d)}=\left(\sup_{0<t<\varepsilon^{-1/2}/(2\pi)} F(t)\right)^{1/d}=\tau_d^{1/d}\,(2\pi)^{-1}.$$
	Note that this is independent of $\varepsilon$ so the estimate also holds in the limit as $\varepsilon\to 0$.
	Now for $p>1$ the claim follows using $q=2p=d+2\gamma$. 
	
	It remains to consider the case $d=1$, $p=1$. We have
	\begin{align*}
		N  &\leq  \sum_{j=1}^{N} s_j ((\Re V+\alpha\Im V)_-^{1/2}(-\Delta+\varepsilon)^{-1/2})^2 \\
		&\leq \sum_{j=1}^{\infty} s_j ((\Re V+\alpha\Im V)_-^{1/2}(-\Delta+\varepsilon)^{-1/2})^2,
	\end{align*}
	where we have set $s_j ((\Re V+\alpha\Im V)_-^{1/2}(-\Delta+\varepsilon)^{-1/2})=0$ in the case that $j$ is larger than the number of all positive singular values.
	Now we apply the Kato--Seiler-Simon estimate \cite[Thm.~4.1]{simon2005trace} which states that
	for $2\leq q<\infty$ and $a,b\in L^{q}(\R^d)$ one has
	$$\sum_{j=1}^\infty s_j(a(x)b(-\ii\nabla))^{q}
	\leq (2\pi)^{-d}\,\int_{\R^d}|a(x)|^{q}\,\dd x\,\int_{\R^d}|b(\xi)|^{q}\,\dd \xi.$$
	Thus the estimate with $d=1$, $q=2$, $a(x)=(\Re V+\alpha\Im V)_-^{1/2}$ and $b(\xi)=(|\xi|^2+\varepsilon)^{-1/2}$ implies that
	\begin{align*}
		&\sum_{j=1}^{\infty} s_j  ((\Re V+\alpha\Im V)_-^{1/2}(-\Delta+\varepsilon)^{-1/2})^2 \\
		&\leq (2\pi)^{-1} \int_\R |(\Re V(x)+\alpha\Im V(x))_-|\,\dd x\,\int_{\R} (\xi^2+\varepsilon)^{-1}\,\dd \xi.
	\end{align*}
	Note that $\int_{\R} (\xi^2+\varepsilon)^{-1}\,\dd \xi=\pi \varepsilon^{-1/2}$. This proves the claim in the case $d=1$, $p=1$.
\end{proof}	

\begin{remark}
	The proof method extends to other Fourier multipliers $H_0$  for which suitable Cwikel or Kato--Seiler--Simon estimates are available. In that case one again obtains an eigenvalue counting bound with an 
	$\varepsilon$-dependent constant, where the growth rate in $\varepsilon$ is determined by the decay rate of the Fourier multiplier symbol $u(\xi)$ of $(H_0+\varepsilon)^{-1/2}$.
	In particular, the estimate \eqref{eq:Cwikelapplication} with $q=2p$ holds whenever $2<q<\infty$
	and the multiplier symbol satisfies $u\in L^{q,\infty}(\R^d)$.
\end{remark}

\begin{remark}
Using the estimates in Theorem~\ref{thm:CLR-complex} on the number of discrete eigenvalues for a given parameter $\gamma$, in Theorem \ref{thm:real part LT est} below we prove LT inequalities for all parameters that are larger than~$\gamma$.
On the other hand, once we know that an LT inequality of the form
$$\sum_{\lambda_j\in\sigma_{\rm d}(-\Delta+V)} (\Re\lambda_j+\alpha\Im\lambda_{j})_-^{\gamma}\leq \widetilde C_{d,p}\,\int_{\R^d}(\Re V(x)+\alpha\Im V(x))_-^p\,\dd x$$
holds with a constant $\widetilde C_{d,p}>0$ that is independent of $\alpha$ and $V$, we obtain an estimate on the number of discrete eigenvalues as follows: using the lower bound
$$\sum_{\lambda_j\in\sigma_{\rm d}(-\Delta+V)} (\Re\lambda_j+\alpha\Im\lambda_{j})_-^{\gamma}
\geq \sum_{\lambda_j\in\sigma_{\rm d}(-\Delta+V), \atop \Re\lambda_j+\alpha\Im\lambda_j<-\varepsilon} \varepsilon^{\gamma}=\varepsilon^{\gamma}N(\Re\lambda+\alpha\Im\lambda<-\varepsilon;-\Delta+V),$$
we obtain
$$N(\Re\lambda+\alpha\Im\lambda<-\varepsilon;-\Delta+V) \leq \widetilde C_{d,p} \,\varepsilon^{-\gamma} \int_{\R^d}(\Re V(x)+\alpha\Im V(x))_-^p\,\dd x.$$
In particular, if $C_{d,p}^{\rm \,sharp}$ denotes the sharp constant in Theorem~\ref{thm:CLR-complex} and $\widetilde C_{d,p}^{\rm \,sharp}$ denotes the sharp constant in Theorem~\ref{thm:real part LT est}, the above argument implies that 
$$C_{d,p}^{\rm \,sharp}\leq \widetilde C_{d,p}^{\rm\, sharp}.$$
\end{remark}

In the endpoint case in one dimension, our constant is sharp.
\begin{theorem}
Let $d=1$ and $p=1$. Then the constant $C_{d,p}=1/2$ in Theorem~\ref{thm:CLR-complex} is sharp.
\end{theorem}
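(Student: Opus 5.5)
Sharpness means: for every $C<1/2$ there exist $V\in L^1(\R)$, $\alpha\in\R$ and $\varepsilon>0$ with
$$N(\Re\lambda+\alpha\Im\lambda<-\varepsilon;-\Delta+V)>C\,\varepsilon^{-1/2}\int_\R(\Re V+\alpha\Im V)_-\,\dd x.$$
We will do this with real potentials, $\alpha=0$, $N=1$, and the delta-like limit. The plan is to take $V=-g\,\chi_{[-L/2,L/2]}/L$, or more simply an exact delta potential approximated by square wells of width $L\to 0$ and total mass $g$. Then $\int V_-=g$. For the operator $-\partial_x^2-g\delta$ there is exactly one negative eigenvalue, $\lambda=-g^2/4$. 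For the approximating wells the ground state energy converges to $-g^2/4$, by norm resolvent convergence, or by the explicit transcendental equation $k\tan(kL/2)=\kappa$ with $\kappa\to g/2$.

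Next choose $\varepsilon$ just below the eigenvalue's modulus: $\varepsilon=(1-\eta)g^2/4$ for small $\eta>0$. Then for $L$ small the eigenvalue satisfies $\lambda<-\varepsilon$, so $N\ge1$. The right-hand side equals
$$C\,\varepsilon^{-1/2}g=C\,\frac{2}{\sqrt{1-\eta}}.$$
Thus if $C<1/2$, choosing $\eta$ small gives $2C/\sqrt{1-\eta}<1\le N$, which contradicts the inequality. Hence no constant smaller than $1/2$ works.

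To avoid limits, one can alternatively use a purely explicit computation with the delta interaction as a form perturbation. That case is covered by the abstract framework of Theorem~\ref{thm:B-S} only if point interactions are admitted (with $W$ the trace map at $0$), but the statement of Theorem~\ref{thm:CLR-complex} is for $L^1$ potentials. So the main step is the approximation argument: I would show directly, via the Birman--Schwinger operator or the explicit square well equation, that the ground state of $-\partial_x^2-(g/L)\chi_{[-L/2,L/2]}$ tends to $-g^2/4$ as $L\to0$. That is routine and is the only mildly technical point. Scaling invariance ($V\mapsto s^2V(s\cdot)$, $\varepsilon\mapsto s^2\varepsilon$) shows the ratio is independent of $g$, so we may fix $g=2$.
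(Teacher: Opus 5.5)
Your proof is correct and follows the paper's route. The paper takes $V_t=-t\delta$, uses its single eigenvalue $-t^2/4$, and lets $t\downarrow 2\varepsilon^{1/2}$ so that $N/(\varepsilon^{-1/2}\int (V_t)_-)=\varepsilon^{1/2}/t\to 1/2$; this is your argument with the coupling varied instead of $\varepsilon$. Your square-well approximation $-(g/L)\chi_{[-L/2,L/2]}$, with $\kappa\to g/2$ from $k\tan(kL/2)=\kappa$, makes the argument more rigorous than the paper's, which simply treats $\delta$ as if it belonged to $L^1(\R)$ although Theorem~\ref{thm:CLR-complex} is stated only for $L^1$ potentials.
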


\begin{proof}
Let $V_t=-t\delta$ for $t>0$ where $\delta\in L^1(\R)$ is the delta-potential. Since $V_t$ is real-valued, the spectrum is real and the inequality in Theorem~\ref{thm:CLR-complex} is independent of $\alpha$. Thus it suffices to prove optimality for $\alpha=0$. For any $t>0$ the point $\lambda_t=-t^2/4$ is a discrete eigenvalue with corresponding eigenfunction $\psi_t(x)=\exp(-t|x|/2)$; in fact, this is the only discrete eigenvalue. 
In order that $\lambda_t<-\varepsilon$, we require $t>2\varepsilon^{1/2}$.
Then
$$\frac{N(\lambda<-\varepsilon;-\frac{\rm d^2}{{\rm d}x^2}+V_t)}{\varepsilon^{-\gamma}\,\int_{\R}(V_t(x))_- \,{\rm d}x}
= \frac{1}{\varepsilon^{-\gamma} t}=\frac{1}{\varepsilon^{-1/2} t}$$
where we have used that $\gamma=1/2$. Now the claim follows since, in the limit  as $t\to 2\varepsilon^{1/2}$, the right-hand side converges to $1/2=C_{d,p}$. 
\end{proof}

\begin{remark}
For other combinations of $d$ and $p$ the sharpness of the constant $C_{d,p}$  in Theorem~\ref{thm:CLR-complex} is an open problem.
For $d\geq 3$ and $\gamma=0$, the constant $C_{d,d/2}$ is the same as Cwikel's constant in his proof of the CLR inequality, see \cite{cwikel1977weak}. In the selfadjoint case, the sharp constant is unknown as well but it is known that e.g.\ Lieb's proof  \cite{Lieb1976CLR} gives a constant better than Cwikel's constant.
The sharpness of the constants in the non-selfadjoint LT inequalities (see Theorem~\ref{thm:real part LT est} below) is also an open problem. For an overview of the results working towards sharp constants in selfadjoint CLR and LT inequalities, see \cite{frank2022schrodinger}.
It is important to remark that, since here we allow for complex potentials as well, the sharp constants may be larger compared to the selfadjoint case.
\end{remark}

\subsection{LT inequalities for complex potentials}

Next we  prove bounds on eigenvalue power sums (Riesz means of order $\gamma$) for eigenvalues in any half-plane $\{\lambda\in\C\,:\,\Re\lambda +\alpha \Im \lambda<0\}$ with $\alpha\in\R$.
Then, in Lemma~\ref{lem:eigenvalue outside cones}, we obtain LT inequalities outside a sector around the essential spectrum; the involved constant depends on the opening angle and diverges with decreasing angle. Integrating over the angle yields a sum over all eigenvalues that depends on the distance to the essential spectrum, see Theorem~\ref{thm:LT complex-B}.

\begin{theorem}\label{thm:real part LT est}
	Let $p=d/2+\gamma$ with $\gamma>1/2$ for $d=1$ and $\gamma>0$ for $d\geq 2$. 
	Then there exists a constant $\widetilde C_{d,p}>0$ such that for all $V\in L^p(\R^d)$ and $\alpha\in\R$,
	$$\sum_{\lambda_j\in\sigma_{\rm d}(-\Delta+V)} (\Re\lambda_j+\alpha\Im\lambda_{j})_-^{\gamma}\leq \widetilde C_{d,p}\,\int_{\R^d}(\Re V(x)+\alpha\Im V(x))_-^p\,\dd x.$$
The constant can be taken as
\begin{equation}\label{eq:consttilde}
\widetilde C_{d,p}= C_{d,p'}^{\,\rm sharp}\,\frac{\gamma^{\gamma+1}}{(\gamma')^{\gamma'}(\gamma-\gamma')^{\gamma-\gamma'}}\, \frac{\Gamma(d/2+\gamma'+1)\Gamma(\gamma-\gamma')}{\Gamma(d/2+\gamma+1)}
\end{equation}
where $C_{d,p'}^{\,\rm sharp}$ is the sharp constant in Theorem~\ref{thm:CLR-complex} for $p'=d/2+\gamma'$ with 
\begin{equation}\label{eq:gamma'}
\gamma'\in [1/2,\gamma)\quad (d=1), \quad \gamma'\in (0,\gamma)\quad  (d=2), \quad \gamma'\in [0,\gamma)\quad (d\geq 3);
\end{equation}
we employ the convention that $(\gamma')^{\gamma'}=1$ if $\gamma'=0$.
\end{theorem}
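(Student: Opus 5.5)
The plan is the classical Aizenman--Lieb lifting: write the Riesz mean as a layer-cake integral of counting functions, and control each counting function by Theorem~\ref{thm:CLR-complex} with exponent $\gamma'$, applied to a shifted potential. Set $U:=(\Re V+\alpha\Im V)_-$ and write $\mu_j:=\Re\lambda_j+\alpha\Im\lambda_j$. For every $\gamma>0$,
\[
\sum_j (\mu_j)_-^{\gamma}=\gamma\int_0^\infty \varepsilon^{\gamma-1}\,N(\Re\lambda+\alpha\Im\lambda<-\varepsilon;-\Delta+V)\,\dd\varepsilon .
\]
This identity follows by Tonelli for any finite subfamily of eigenvalues; the general case then follows by monotone convergence. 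So it suffices to bound the counting function for each $\varepsilon>0$.

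\textbf{Shifting the potential.} Fix $\varepsilon>0$ and $\tau\in(0,1)$. Define the complex potential $V_\tau:=V-(1-\tau)\varepsilon$. Then $-\Delta+V_\tau=-\Delta+V-(1-\tau)\varepsilon$, so its eigenvalues are $\lambda_j-(1-\tau)\varepsilon$. Since $\alpha$ is real, $\Im$ is unchanged, and $\Re\lambda_j+\alpha\Im\lambda_j<-\varepsilon$ holds if and only if the shifted eigenvalue satisfies the same condition with threshold $-\tau\varepsilon$. Moreover,
\[
\Re V_\tau+\alpha\Im V_\tau=\Re V+\alpha\Im V-(1-\tau)\varepsilon,
\qquad
(\Re V_\tau+\alpha\Im V_\tau)_-=\bigl(U-(1-\tau)\varepsilon\bigr)_+\le U .
\]
The last bound gives $V_\tau\in L^{p'}$ on the relevant part; more precisely, Theorem~\ref{thm:CLR-complex} only sees this negative part.

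There is one technical issue here: $V_\tau$ is no longer in $L^{p'}$, because of the constant shift. I would handle it by an approximation. Replace the constant by $(1-\tau)\varepsilon\chi_{B_R}$, which lies in $L^{p'}$ when $p'>0$. As $R\to\infty$, eigenvalues in the open half-plane persist by strong resolvent/norm continuity, so lower semicontinuity of the counting function is enough. Alternatively, the proof of Theorem~\ref{thm:CLR-complex} only used $(\Re V+\alpha\Im V)_-\in L^{p'}$ through \eqref{eq:N-singular Schrodinger}, and Theorem~\ref{thm:B-S} itself is abstract. I expect this justification to be the main technical obstacle. I would first try the direct route: apply Theorem~\ref{thm:B-S} with $H_0=-\Delta$, perturbation $V$, and $\varepsilon$ replaced by $\tau\varepsilon$, writing the shift into $\tilde S_K$ by hand, since the estimate chain \eqref{eq:N-singular Schrodinger} remains valid for the bounded part $-(1-\tau)\varepsilon$.

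\textbf{Counting bound and integration.} This yields
\[
N(\Re\lambda+\alpha\Im\lambda<-\varepsilon;-\Delta+V)\le C^{\rm sharp}_{d,p'}(\tau\varepsilon)^{-\gamma'}\int_{\R^d}\bigl(U-(1-\tau)\varepsilon\bigr)_+^{p'}\dd x .
\]
Insert this into the layer-cake formula and use Fubini. For fixed $x$,
\[
\gamma\tau^{-\gamma'}\int_0^{U/(1-\tau)}\varepsilon^{\gamma-\gamma'-1}\bigl(U-(1-\tau)\varepsilon\bigr)^{p'}\dd\varepsilon
=\gamma\tau^{-\gamma'}(1-\tau)^{-(\gamma-\gamma')}\,U^{p}\,B(\gamma-\gamma',p'+1).
\]
The integral converges since $\gamma'<\gamma$, and $p'+\gamma-\gamma'=p$. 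The Beta function equals $\Gamma(\gamma-\gamma')\Gamma(d/2+\gamma'+1)/\Gamma(d/2+\gamma+1)$.

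Finally, minimise $\tau^{-\gamma'}(1-\tau)^{-(\gamma-\gamma')}$ over $\tau\in(0,1)$. The minimiser is $\tau=\gamma'/\gamma$, giving $\gamma^{\gamma}/\bigl((\gamma')^{\gamma'}(\gamma-\gamma')^{\gamma-\gamma'}\bigr)$. When $\gamma'=0$, take $\tau\to0$, which is consistent with the convention $(\gamma')^{\gamma'}=1$. Multiplying by the prefactor $\gamma$ gives exactly \eqref{eq:consttilde}. The conditions \eqref{eq:gamma'} are precisely those under which Theorem~\ref{thm:CLR-complex} applies with $p'$.
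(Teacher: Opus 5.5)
Your argument is the paper's: the Aizenman--Lieb layer-cake formula, a counting bound with exponent $\gamma'$ for the operator shifted by a positive constant, the Beta integral, and optimisation of the splitting parameter. Your $\tau$ corresponds to the paper's $t/(1+t)$. The paper counts below $-(1+t)\varepsilon$, shifts by $\varepsilon$ and counts below $-t\varepsilon$. Hence $\tau^{-\gamma'}(1-\tau)^{-(\gamma-\gamma')}=t^{-\gamma'}(1+t)^{\gamma}$, the optimisers $\tau=\gamma'/\gamma$ and $t=\gamma'/(\gamma-\gamma')$ correspond, and the constants agree.

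\textbf{The sign of the shift.} It must be $V_\tau=V+(1-\tau)\varepsilon$. With $V-(1-\tau)\varepsilon$ the new threshold is $-(2-\tau)\varepsilon$, and the negative part is not $(U-(1-\tau)\varepsilon)_+$. Everything you do afterwards uses the correct sign.

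\textbf{The truncation route.} Drop the truncation $(1-\tau)\varepsilon\chi_{B_R}$ as a justification, for three reasons. There is no norm resolvent convergence as $R\to\infty$, since the essential spectra are $[0,\infty)$ and $[(1-\tau)\varepsilon,\infty)$. Strong resolvent convergence alone does not prevent isolated eigenvalues of a non-selfadjoint limit from disappearing. And outside $B_R$ the negative part is still $U$, which need not lie in $L^{p'}$ when only $V\in L^p$ is assumed.

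\textbf{The direct route.} This is exactly what the paper does, and nothing has to be redone by hand. Proposition~\ref{prop:algebraic bd} is stated for an arbitrary bounded $K$ and already contains the Jordan-chain step for algebraic multiplicities. So you take $K=cI$ with $c=(1-\tau)\varepsilon$ and Birman--Schwinger parameter $\tau\varepsilon$, then run the estimate chain of the proof of Theorem~\ref{thm:CLR-complex} with $g=(U-c)_+^{1/2}$. To get $g\in L^{2p'}$, do not use $(U-c)_+\le U$. Use instead $(U-c)_+^{p'}\le c^{p'-p}U^p$, noting that the left side vanishes off $\{U>c\}$. With this you do not even need the paper's preliminary reduction to $V\in C_c^\infty(\R^d)$.

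\textbf{A caveat shared with the paper.} Rerunning the proof of Theorem~\ref{thm:CLR-complex} yields its explicit Cwikel (or Kato--Seiler--Simon) constant, not literally $C^{\rm sharp}_{d,p'}$. Inserting the sharp constant requires transferring the inequality of Theorem~\ref{thm:CLR-complex} itself to the shifted operator. That is where a rigorous truncation argument would actually be needed, for instance for $V\in C_c^\infty(\R^d)$ via norm convergence of the Birman--Schwinger operators and a Rouch\'e-type count of multiplicities.
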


\begin{proof}
	Let $V\in L^p(\R^d)$.
	We take a sequence of potentials $V_{n}\in C^{\infty}_{c}(\R^{d})$ from the space of infinitely differentiable functions with compact supports such that $\|V_{n}-V\|_{L^{p}(\R^d)}\to 0$ as $n\to\infty$. To prove the estimate in the statement, we restrict ourselves to any arbitrary finite collection of $N$ eigenvalues $\lambda_{j}\in\sigma_{\rm d}(-\Delta+V)$. Similarly as in the proof of~\cite[Lem. 5.4]{han_11}, for $a>0$ sufficiently large we obtain the operator norm convergence
	\[
	\|(-\Delta+V_{n}+a)^{-1}-(-\Delta+V+a)^{-1}\|\to 0
	\]
	as $n\to\infty$. By~\cite[Sect.~IV.3.5]{kato1976perturbation}, this implies that each eigenvalue $\lambda_{j}\in\sigma_{\rm d}(-\Delta+V)$ is the limit of a sequence of eigenvalues $\lambda_{j}(n)\in\sigma_{\rm d}(-\Delta+V_{n})$ as $n\to\infty$, with preserved algebraic multiplicity.
	This convergence argument proves that, without loss of generality, we can assume that $V\in C_c^{\infty}(\R^d)$.
	
	Let $\alpha\in\R$ and $\varepsilon>0$.
	Define $\varepsilon'=(1+t)\varepsilon$ for some $t>0$, and let $p'=d/2+\gamma'$ where 
	$\gamma'$ is as in \eqref{eq:gamma'}.
	Then $V\in L^{p'}(\R^d)$.
	First note that
	$$N( \Re\lambda+\alpha\Im\lambda<-\varepsilon'; -\Delta+V)
	=N(\Re\lambda+\alpha\Im\lambda<-t\varepsilon; -\Delta+V+\varepsilon).$$
	Unfortunately, we cannot directly use Theorem~\ref{thm:CLR-complex} to estimate
	\[
		N(\Re\lambda+\alpha\Im\lambda<-t\varepsilon; -\Delta+V+\varepsilon)
	\] 
	since $V+\varepsilon\notin L^{p'}(\R^d)$. Instead, we  proceed analogously as in the proof of Theorem~\ref{thm:CLR-complex}, using Proposition~\ref{prop:algebraic bd} with $K=\varepsilon$ instead of Theorem~\ref{thm:B-S}, to arrive at the analogous bound in terms of $(\Re V+\alpha\Im V+\varepsilon)_-\in L^{p'}(\R^d)$. Hence
	\begin{align*}
		N( \Re\lambda+\alpha\Im\lambda<-\varepsilon'; -\Delta+V)
		&=N(\Re\lambda+\alpha\Im\lambda<-t\varepsilon; -\Delta+V+\varepsilon) \\
		&\leq  C_{d,p'}^{\,\rm sharp} \, (t\varepsilon)^{-\gamma'}\int_{\R^d}(\Re V(x)+\alpha\Im V(x)+\varepsilon)_-^{p'}\,\dd x\\
		&=C_{d,p'}^{\,\rm sharp}\, t^{-\gamma'}\varepsilon^{-\gamma'}\int_{\R^d}(\Re V(x)+\alpha\Im V(x)+\varepsilon)_-^{d/2+\gamma'}\,\dd x.
	\end{align*}
	Since $\gamma>\gamma'$, an argument due to Aizenman and Lieb \cite[Eq.~(9)]{aizenman1978semi} yields, with substitution $\varepsilon'=(1+t)\varepsilon$,
	\begin{align*}
		&\sum_{\lambda_j\in\sigma_{\rm d}(-\Delta+V)} (\Re\lambda_j+\alpha\Im\lambda_{j})_-^{\gamma}
		=\gamma  \int_0^\infty N(\Re\lambda+\alpha\Im\lambda<-\varepsilon'; -\Delta+V)(\varepsilon')^{\gamma-1}\,\dd\varepsilon'\\
		&\leq   C_{d,p'}^{\,\rm sharp}\,t^{-\gamma'}(1+t)^{\gamma}\, \gamma \int_0^\infty \varepsilon^{\gamma-\gamma'-1}\int_{\R^d}(\Re V(x)+\alpha\Im V(x)+\varepsilon)_-^{d/2+\gamma'}\,\dd x\,\dd \varepsilon\\
		&= C_{d,p'}^{\,\rm sharp}\,t^{-\gamma'}(1+t)^{\gamma} \,\gamma\, \frac{\Gamma(d/2+\gamma'+1)\Gamma(\gamma-\gamma')}{\Gamma(d/2+\gamma+1)}
		\int_{\R^d}(\Re V(x)+\alpha\Im V(x))_-^{d/2+\gamma}\,\dd x.
	\end{align*}
	An easy computation shows that the infimum over $t>0$ is attained at  $t=\frac{\gamma'}{\gamma - \gamma'}$ with
	$$\inf_{t>0}\, t^{-\gamma'}(1+t)^{\gamma}
=
\frac{\gamma^{\gamma}}{(\gamma')^{\gamma'}(\gamma-\gamma')^{\gamma-\gamma'}}.$$
	This implies the claim. 
\end{proof}  

\begin{remark}
	Comparing the assumptions of Theorem~\ref{thm:real part LT est} with the most general assumptions on $\gamma$ in \eqref{eq:gamma}, we see that the result is not applicable in $d=1$ if $\gamma=1/2$, i.e.\ for $p=1$. 
	It remains an open problem whether the result continues to hold in this case.
	The same applies to Theorem~\ref{thm:LT complex-B} below which is a consequence of  Theorem~\ref{thm:real part LT est}.
\end{remark}

\begin{remark}
For $\gamma\geq 1$ it was shown in  \cite{frank2006lieb} that an inequality as in Theorem~\ref{thm:real part LT est} holds and the constant $\widetilde C_{d,p}$ can be taken as the sharp \emph{selfadjoint} LT constant, which is therefore also the sharp constant here. For $\gamma<1$ the sharp constant is unknown; whether it coincides with our $\widetilde C_{d,p}$ in~\eqref{eq:consttilde} is left as an open problem. 
For $d\geq 3$, if we calculate $\widetilde C_{d,p}$ in~\eqref{eq:consttilde} with $\gamma'=0$ and use that $\gamma \,\Gamma(\gamma)=\Gamma(\gamma+1)$, we arrive at
$$\widetilde C_{d,p}=
C_{d,d/2}^{\,\rm sharp} \,\frac{\Gamma(d/2+1)\,\Gamma(\gamma+1)}{\Gamma(d/2+\gamma+1)}.
$$
Note that, with $\Gamma(1)=1$, we obtain $\widetilde C_{d,p}\to C_{d,d/2}^{\,\rm sharp}$ as $p\to d/2$, i.e.\ we recover the sharp CLR constant $C_{d,d/2}^{\,\rm sharp}$ from Theorem~\ref{thm:CLR-complex}.
Furthermore, we make the observation that $\widetilde C_{d,p}=C_d \,L_{\gamma,d}^{\mathrm{cl}} $ where
$$C_d:=
C_{d,d/2}^{\,\rm sharp}\, \Gamma(d/2+1) (4\pi)^{d/2},
\quad
L_{\gamma,d}^{\mathrm{cl}} 
:= (4\pi)^{-d/2} \, \frac{\Gamma(\gamma+1)}{\Gamma(d/2+\gamma+1)}.$$
The \emph{semiclassical LT constant} $L_{\gamma,d}^{\mathrm{cl}}$ is known to be a lower bound for the sharp selfadjoint LT constant, and hence for the sharp constant $\widetilde C_{d,p}^{\,\rm sharp}$ in Theorem~\ref{thm:real part LT est}. This implies the two-sided bound
$$L_{\gamma,d}^{\mathrm{cl}}  \leq\widetilde C_{d,p}^{\,\rm sharp} \leq C_d\, L_{\gamma,d}^{\mathrm{cl}}.$$
\end{remark}

If we take all discrete eigenvalues into account, we arrive at the following LT type inquality.
This is a generalisation of \cite[Thm.~2.1]{boegli2023improved} which was proved only for $\gamma\geq 1$.

\begin{theorem}\label{thm:LT complex-B}
	Let $p=d/2+\gamma$ with $\gamma>1/2$ if $d=1$, $\gamma>0$ if $d=2$ and $\gamma\geq 0$ if $d\geq 3$. Given a continuous, non-increasing function $f:[0,\infty)\to(0,\infty)$, if $f$ satisfies
	\begin{equation}\label{eq:Optimal condition}
		\int_{0}^{\infty} f(x)\;\dd x <\infty,
	\end{equation}
	then there exists a constant $C_{d,p,f}>0$ such that for all $V\in L^{p}(\R^{d})$,
	\begin{equation}\label{eq:LT complex-B}
		\begin{aligned}
			\sum_{\lambda\in\sigma_{\rm d}(-\Delta+V)} \frac{\dist(\lambda,[0,\infty))^{p}}{|\lambda|^{d/2}}
			f\left(-\log\left(\frac{\dist(\lambda,[0,\infty))}{|\lambda|}\right)\right)
			\leq C_{d,p,f} \int_{\R^d} |V(x)|^{p}\;\dd x,
		\end{aligned}
	\end{equation}
	where $C_{d,p,f}=C_{d,p}\left(f(0)+\int_{0}^{\infty} f(x)\;\dd x\right)$ for an $f$-independent constant $C_{d,p}>0$.
\end{theorem}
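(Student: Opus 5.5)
The plan is to follow the strategy of \cite[Thm.~2.1]{boegli2023improved}: first prove an LT bound for eigenvalues outside a sector around $[0,\infty)$ (this is Lemma~\ref{lem:eigenvalue outside cones} announced in the introduction), with an explicit dependence on the opening angle, and then integrate that bound against a weight in the angle. The weight $f$ is chosen so that the integrated left-hand side dominates the sum in \eqref{eq:LT complex-B}.

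\textbf{Step 1: sector bound.} Take a half-plane $\{\Re(\e^{\ii\varphi}\lambda)<0\}$ with $\varphi\in(-\pi/2,\pi/2)$. Write $\Re(\e^{\ii\varphi}\lambda)=\cos\varphi\,(\Re\lambda-\tan\varphi\,\Im\lambda)$. Apply Theorem~\ref{thm:real part LT est} with $\alpha=-\tan\varphi$ and use $|\Re V+\alpha\Im V|\le(1+\alpha^2)^{1/2}|V|$. This gives
\[
\sum_{\lambda}(\Re(\e^{\ii\varphi}\lambda))_-^{\gamma}\le \widetilde C_{d,p}\,(\cos\varphi)^{\gamma-p}\int_{\R^d}|V|^p\,\dd x
=\widetilde C_{d,p}\,(\cos\varphi)^{-d/2}\int_{\R^d}|V|^p\,\dd x .
\]
For $d\ge3$ and $\gamma=0$, Theorem~\ref{thm:real part LT est} is not available, so I use Theorem~\ref{thm:CLR-complex} with $\varepsilon\to0$, which yields the same bound as a count. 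Fix $\lambda$ outside the sector $|\Im\lambda|<\kappa\Re\lambda$. Choose $\varphi$ with $|\varphi|$ close to $\pi/2$ in the direction of $\Im\lambda$, so that $\e^{\ii\varphi}\lambda$ points into the left half-plane. Then $(\Re(\e^{\ii\varphi}\lambda))_-\gtrsim|\lambda|$ for eigenvalues well inside the region, with angle-dependent constants. This reproduces \eqref{eq:FLLS} for $\gamma<1$.

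\textbf{Step 2: refinement in terms of $\dist(\lambda,[0,\infty))$.} For $\lambda$ in the right half-plane with small argument $\theta=\arg\lambda$, we have $\dist(\lambda,[0,\infty))=|\Im\lambda|=|\lambda|\sin|\theta|$. Pick the rotation angle $\varphi=\mathrm{sgn}(\theta)(\pi/2-\beta)$ with a small $\beta$. Then $\Re(\e^{\ii\varphi}\lambda)=|\lambda|\cos(\theta+\varphi)$, which is negative of size about $|\lambda|(\sin|\theta|-\beta)$ when $|\theta|>\beta$. The right-hand side carries the factor $(\cos\varphi)^{-d/2}=(\sin\beta)^{-d/2}$. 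Take $\beta=|\theta|/2$ for a dyadic scale $|\theta|\sim 2^{-k}$ (equivalently $\beta=\e^{-s}$), multiply the half-plane inequality by $\beta^{d/2}\,\e^{\gamma s}$, and combine so that each eigenvalue with $\dist(\lambda,[0,\infty))/|\lambda|\approx\e^{-s}$ contributes at least
\[
\frac{\dist(\lambda,[0,\infty))^p}{|\lambda|^{d/2}}\,f(s).
\]
Concretely, I integrate the inequality of Step 1 over $s\in[0,\infty)$ with weight $f(s)\,\e^{-sd/2}\cdot\e^{sd/2}$, so that the $(\cos\varphi)^{-d/2}$ factor cancels and the integral reduces to $\int_0^\infty f(s)\,\dd s$. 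Since $f$ is non-increasing, the weight at $s=-\log(\dist/|\lambda|)$ is bounded below by the value over an $s$-interval of length of order one. The eigenvalues in the closed left half-plane and in the large-angle region ($\dist\approx|\lambda|$, so $s$ near $0$) are handled directly by Step 1 with a fixed angle. This is the source of the $f(0)$ term in $C_{d,p,f}$.

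\textbf{Main obstacle.} The delicate point is the bookkeeping in Step 2. We need the exact pointwise lower bound $(\Re(\e^{\ii\varphi}\lambda))_-^\gamma\ge c\,\dist^{\gamma}\cdots$ uniformly over an $s$-interval of fixed length. We also need powers that exactly yield $\dist^{p}|\lambda|^{-d/2}$: since $\dist=|\lambda|\e^{-s}$, we have $|\lambda|^\gamma\e^{-sp}\cdot$const $=\dist^p|\lambda|^{-d/2}\cdot$const, which matches if the weight is $\e^{-sp}/(\cos\varphi)^{-d/2}\approx \e^{-s\gamma}$. Getting these factors aligned, with constants independent of $f$, is where care is needed.

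For $\gamma=0$ the left side of Step 1 is a count, and the same argument works with $(\cdot)_-^0$ read as an indicator. Finally, the approximation by $C_c^\infty$ potentials is already built into Theorem~\ref{thm:real part LT est}.
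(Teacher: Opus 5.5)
Your route differs mildly from the paper's. The paper proves the sector bound (Lemma~\ref{lem:eigenvalue outside cones}, via the rotation argument of \cite{frank2006lieb} applied to Theorem~\ref{thm:real part LT est}) and then integrates it over the aperture $\kappa$ as in \cite[Thm.~2.1]{boegli2023improved}. You instead integrate the rotated half-plane bounds $\sum_\lambda(\Re(\e^{\ii\varphi}\lambda))_-^{\gamma}\le\widetilde C_{d,p}(\cos\varphi)^{-d/2}\int_{\R^d}|V|^p\,\dd x$ directly over $\varphi$, and keep a fixed-aperture sector bound only for eigenvalues far from $[0,\infty)$. That is a legitimate alternative. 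Your treatment of $d\ge3$, $\gamma=0$ via Theorem~\ref{thm:CLR-complex} is correct and in fact necessary, since Theorem~\ref{thm:real part LT est} requires $\gamma>0$.

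The gap is the decisive estimate of Step~2, which as written runs the wrong way. Take $\beta=\e^{-s}$ (or $\beta=|\theta|/2$) and an eigenvalue with $\dist(\lambda,[0,\infty))/|\lambda|=\sin|\theta|=\e^{-s_0}$. It lies in the rotated half-plane only when $\beta<|\theta|$, and satisfies $(\Re(\e^{\ii\varphi}\lambda))_-\gtrsim|\lambda|\e^{-s_0}$ only for $s\ge s_0+\log 2$. On that range $f(s)\le f(s_0)$, so monotonicity gives an upper bound, not the lower bound you need. What your bookkeeping actually produces is $f(s_0+c)$ in place of $f(s_0)$. This is strictly weaker, because a non-increasing integrable $f$ may drop by any factor on an interval of length $c$; the phrase ``$\dist/|\lambda|\approx\e^{-s}$ contributes $f(s)$'' hides exactly this shift. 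Your weights are also mutually inconsistent. The right one is $w(s)=f(s)(\cos\varphi_s)^{d/2}$ with no factor $\e^{\gamma s}$: the $\e^{-s_0\gamma}$ comes from $(\Re(\e^{\ii\varphi_s}\lambda))_-^{\gamma}$ and the $\e^{-s_0d/2}$ from $(\cos\varphi_s)^{d/2}$.

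The repair is an offset in the parametrisation. For $\Im\lambda>0$ (symmetrically for $\Im\lambda<0$) take $\varphi_s=\pi/2-\beta(s)$ with $\beta(s)=\tfrac12\e^{-s-c}$ and $c\ge1$ fixed. Let $s_0\ge c$ and $s\in[s_0-1,s_0]$. Then $\beta(s)\le\tfrac12\sin|\theta|\le|\theta|/2$, hence $(\Re(\e^{\ii\varphi_s}\lambda))_-=|\lambda|\sin(|\theta|-\beta(s))\ge|\lambda|\sin(|\theta|/2)\ge\tfrac12|\lambda|\e^{-s_0}$. On the same window $f(s)\ge f(s_0)$ and $\cos\varphi_s=\sin\beta(s)\ge\pi^{-1}\e^{-s_0-c}$. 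Integrate the half-plane bound against $w(s)\,\dd s$ on $[0,\infty)$ (Tonelli) and add both signs of $\Im\lambda$ to get
\[
2^{-\gamma}\pi^{-d/2}\e^{-cd/2}\sum_{s_0(\lambda)\ge c}\frac{\dist(\lambda,[0,\infty))^{p}}{|\lambda|^{d/2}}\,f(s_0(\lambda))\le 2\,\widetilde C_{d,p}\int_0^\infty f(s)\,\dd s\int_{\R^d}|V(x)|^p\,\dd x,
\]
with $\widetilde C_{d,p}$ replaced by $C_{d,d/2}$ if $\gamma=0$.

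The remaining eigenvalues have $s_0<c$, which includes all with $\Re\lambda\le0$. They lie outside the fixed sector $|\Im\lambda|<\kappa_c\Re\lambda$ with $\kappa_c=\tan(\arcsin\e^{-c})$, and their terms are at most $f(0)|\lambda|^\gamma$. So Lemma~\ref{lem:eigenvalue outside cones} bounds their sum by $f(0)\,C_{d,p}(1+2/\kappa_c)^p\int_{\R^d}|V|^p\,\dd x$. Equivalently, run your unshifted argument with $f(\max\{s-c,0\})$, whose integral is $cf(0)+\int_0^\infty f$. Either way you obtain $C_{d,p,f}=C_{d,p}(f(0)+\int_0^\infty f)$, using only the monotonicity of $f$.
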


\begin{remark}
	For any $0<t<1$, Demuth, Hansmann and Katriel proved that there is $C_{d,p,t}>0$ such that for all $V\in L^{p}(\R^{d})$ with $p=d/2+\gamma$ and $\gamma\geq 1$,  
	\[
	\sum_{\lambda\in\sigma_{\rm d}(-\Delta+V)} \frac{\dist(\lambda,[0,\infty))^{p+t}}{|\lambda|^{d/2+t}}
	\leq C_{d,p,t}\int_{\R^d} |V(x)|^{p}\;\dd x.
	\]
	In view of Theorem~\ref{thm:LT complex-B}, we can not only recover the Demuth-Hansmann-Katriel bound by inserting $f(x)=\e^{-tx}$, but we can also extend the result to the wider range of $\gamma$.
\end{remark}

	\begin{remark}\label{rem:accumulation rate}
Under the assumptions of Theorem~\ref{thm:LT complex-B}, we can describe the accumulation rate of discrete eigenvalues to any non-zero points of the essential spectrum. If there exists a sequence of discrete eigenvalues $\{\lambda_{j}\}_{j\in\N}$ of $-\Delta+V$ accumulating at a point $x_{0}\in (0,\infty)$, then $|\Im\lambda_j|\to 0$ as $j\to\infty$ so fast that $\sum_{j\in\N} |\Im\lambda_{j}|^{p} f(-\log(|\Im\lambda_j|/x_0))<\infty$.
	\end{remark}

\begin{remark}\label{rem:sharpcond}
	The integrability condition~\eqref{eq:Optimal condition} is sharp in the sense that if it is omitted, then~\eqref{eq:LT complex-B} cannot hold. A counterexample is given by the one-parameter family of purely imaginary potentials $V:=\ii h\chi_{[-1,1]}$ if $d=1$ and $V:=\ii h\chi_{B_{1}(0)}$ if $d\geq 2$ where $h>0$ and $B_{1}(0)$ is the open unit ball in $\R^{d}$, see \cite{bogli2021lieb,bogli2025lieb,BPoptimal,boegli2023improved}.
\end{remark}

The proof of Theorem~\ref{thm:LT complex-B} relies on the following LT inequality outside the $\kappa$-dependent sector $\{\lambda\in\C\,:\,|\Im\lambda |<\kappa \Re\lambda\}$ for $\kappa>0$.
This is a generalisation of  \cite[Thm.~1]{frank2006lieb}  which was proved only for $\gamma\geq 1$.

\begin{lemma}\label{lem:eigenvalue outside cones}
	Let $p=d/2+\gamma$ with $\gamma>1/2$ if $d=1$, $\gamma>0$ if $d=2$ and $\gamma\geq 0$ if $d\geq 3$. Then there exists a constant $C_{d,p}>0$ such that for all $V\in L^p(\R^d)$ and $\kappa>0$,
	\[
	\sum_{\lambda\in\sigma_{\rm d}(-\Delta+V),\, |\Im\lambda|\geq\kappa\Re\lambda} |\lambda|^{\gamma}
	\leq C_{d,p}\left(1+\frac{2}{\kappa}\right)^{p}\int_{\R^d} |V(x)|^{p}\;\dd x.
	\]
\end{lemma}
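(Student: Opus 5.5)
We follow the strategy of \cite[Thm.~1]{frank2006lieb}, replacing their $\gamma\geq 1$ half-plane input by Theorem~\ref{thm:real part LT est} (for $\gamma>0$), and, for $d\geq 3$ and $\gamma=0$, by the counting bound of Theorem~\ref{thm:CLR-complex} directly. The idea is to cover the region $\{|\Im\lambda|\geq\kappa\Re\lambda\}$ by a fixed number of rotated half-planes, on each of which $(\Re\lambda+\alpha\Im\lambda)_-$ (after normalisation) is comparable to $|\lambda|$.

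\textbf{Step 1: Covering.} We split the region into the left half-plane part $\Re\lambda\leq 0$ and the two parts $\pm\Im\lambda\geq\kappa\Re\lambda>0$, and treat the upper part $\Im\lambda\geq\kappa\Re\lambda$, $\Re\lambda>0$; the lower part is symmetric with $\alpha\mapsto-\alpha$. For $\alpha<0$ the half-plane condition reads $\Re\lambda+\alpha\Im\lambda<0$. With $\alpha=-2/\kappa$ and $\Im\lambda\geq\kappa\Re\lambda$ we get
\[
-\Re\lambda-\alpha\Im\lambda\geq \tfrac{2}{\kappa}\Im\lambda-\tfrac{1}{\kappa}\Im\lambda=\tfrac1\kappa\Im\lambda .
\]
Moreover $|\lambda|\leq\Re\lambda+\Im\lambda\leq(1+1/\kappa)\Im\lambda$, so $(\Re\lambda+\alpha\Im\lambda)_-\geq |\lambda|/(\kappa+1)$. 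The precise choice of $\alpha$ (e.g.\ the normalised direction $\Re(\e^{\ii\varphi}\lambda)$ with $\tan\varphi$ related to $\kappa$) will be tuned so that the final factor is $(1+2/\kappa)^p$. For the left half-plane, $\alpha=\pm1$ gives $(\Re\lambda\pm\Im\lambda)_-\geq|\lambda|$ on the appropriate quadrant.

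\textbf{Step 2: Apply the half-plane bounds.} On each piece, $|\lambda|^\gamma\leq c_\kappa^\gamma(\Re\lambda+\alpha\Im\lambda)_-^\gamma$. Theorem~\ref{thm:real part LT est} then gives a bound by $\widetilde C_{d,p}\int(\Re V+\alpha\Im V)_-^p$, and we use $(\Re V+\alpha\Im V)_-\leq (1+|\alpha|)|V|$. To make the constants combine into $(1+2/\kappa)^p$, the cleanest route is to divide the half-plane inequality by $(1+|\alpha|)^\gamma$, i.e.\ to work with the normalised functional $\Re(\e^{\ii\varphi}\lambda)$ via $\cos\varphi$, so that the powers $\gamma$ and $d/2$ merge into $p$. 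Summing the finitely many pieces gives the claim with an absolute multiplicative constant.

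\textbf{Step 3: The case $\gamma=0$, $d\geq 3$.} Here Theorem~\ref{thm:real part LT est} is unavailable, but the left side is just a count. Since $|\lambda|^0=1$, the count in each rotated open half-plane $\Re\lambda+\alpha\Im\lambda<0$ is bounded by the second inequality of Theorem~\ref{thm:CLR-complex}. Eigenvalues on the boundary line $\Re\lambda+\alpha\Im\lambda=0$ are handled by a slight perturbation of $\alpha$, using continuity of the right-hand side in $\alpha$. Note also that a sector boundary $|\Im\lambda|=\kappa\Re\lambda$ lies strictly inside our chosen half-planes.

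The main obstacle is bookkeeping: producing exactly the stated dependence $(1+2/\kappa)^p$ rather than merely some power of $\kappa^{-1}$. This requires a careful choice of rotation angle and a careful use of homogeneity. We expect the argument to mirror \cite{frank2006lieb} verbatim once the half-plane inequality is in hand.
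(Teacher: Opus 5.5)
\textbf{Verdict: same route as the paper, but with one unfinished step that does not work as written.} Your strategy matches the paper's. The paper's proof just runs the rotated half-plane argument of \cite[Thm.~1]{frank2006lieb}, with Theorem~\ref{thm:real part LT est} replacing the $\gamma\geq 1$ half-plane bound. Your covering and the estimates $(\Re\lambda\mp\Im\lambda)_-\geq|\lambda|$ on the left quadrants are correct. Your separate treatment of $\gamma=0$, $d\geq 3$ via the second inequality of Theorem~\ref{thm:CLR-complex} is also correct, and it is actually needed: Theorem~\ref{thm:real part LT est} excludes $\gamma=0$, although the paper's one-line proof cites only that theorem. In that case no $\kappa$-problem arises, because $(\kappa+1)^\gamma=1$. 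The perturbation of $\alpha$ in Step~3 is superfluous, since your Step~1 inequalities already place every piece strictly inside the corresponding open half-plane.

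\textbf{The gap is the $\kappa$-bookkeeping you defer.} With your choice $\alpha=-2/\kappa$, the chain gives
\[
\sum_{\Im\lambda\geq\kappa\Re\lambda>0}|\lambda|^\gamma\leq(\kappa+1)^{\gamma}\,\widetilde C_{d,p}\Bigl(1+\frac{2}{\kappa}\Bigr)^{p}\int_{\R^d}|V(x)|^p\,\dd x .
\]
For $\gamma>0$ the extra factor $(\kappa+1)^\gamma$ is unbounded as $\kappa\to\infty$. This is not an artefact of the estimate. As $\kappa\to\infty$ your direction tends to the real axis while the sector piece hugs the positive imaginary axis, so $(\Re\lambda+\alpha\Im\lambda)_-$ really is only of order $|\lambda|/\kappa$ there.

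Your proposed remedy does not help. Normalising to $\Re(\e^{\ii\varphi}\lambda)$ via $\cos\varphi$, or dividing by $(1+|\alpha|)^\gamma$, only rescales the same inequality and leaves $(\kappa+1)^\gamma$ untouched. What matters is the choice of direction.

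\textbf{How to close it.} Either of the following works.
\begin{itemize}
\item Split in $\kappa$. For $\kappa\leq 1$ the factor $(\kappa+1)^\gamma$ is at most $2^\gamma$. For $\kappa>1$ the region $\{|\Im\lambda|\geq\kappa\Re\lambda\}$ is contained in the region for $\kappa=1$, and $(1+2/\kappa)^p\geq 1$, so the $\kappa=1$ bound suffices.
\item Do as \cite{frank2006lieb}. With $\tan\phi_0=\kappa$ and $\theta=\phi_0/2-\pi/2$, on the whole upper region $\arg\lambda\in[\phi_0,\pi]$ one has $\Re(\e^{-\ii\theta}\lambda)\leq-\sin(\phi_0/2)\,|\lambda|$ and $\cos\theta=\sin(\phi_0/2)$. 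Theorem~\ref{thm:real part LT est} with $\alpha=\tan\theta$ then yields the factor $\sin(\phi_0/2)^{-p}$, and $\sin(\phi_0/2)^{-p}\leq 2^{p/2}(1+2/\kappa)^p$ uniformly in $\kappa>0$.
\end{itemize}
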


\begin{proof} The proof is exactly the same as in  \cite[Thm.~1]{frank2006lieb} but instead of  \cite[Lem.~1]{frank2006lieb} we now employ Theorem~\ref{thm:real part LT est} which is applicable to the larger range of $\gamma$.
\end{proof}

\begin{remark}
With decreasing opening angle, if $\kappa\to 0$, the involved constant diverges as $\kappa^{-p}$. This divergence rate is optimal, see  \cite{BPoptimal,boegli2023improved}, which is shown using the same family of potentials as in Remark~\ref{rem:sharpcond}.
\end{remark}

Now we prove the LT type inequalities.

\begin{proof}[Proof of Theorem~\ref{thm:LT complex-B}]
	We proceed in the exact same way as in \cite[Thm.~2.1]{boegli2023improved} but instead of \cite[Thm.~1]{frank2006lieb} we now employ Lemma~\ref{lem:eigenvalue outside cones} which is applicable to the wider range of~$\gamma$.
\end{proof}

\section*{Acknowledgements}
The authors are grateful to Lukas Schimmer for drawing their attention to reference \cite{LTdetailed}.
The PhD of S.~P.\ is funded through a Development and Promotion of Science and Technology (DPST) scholarship of the Royal Thai Government, ref.\ 5304.1/3758.
The authors would like to thank the Isaac Newton Institute for Mathematical Sciences, Cambridge, for support and hospitality during the programme \emph{Geometric spectral theory and applications}, where work on this paper was undertaken. This work was supported by EPSRC grant EP/Z000580/1.

%
%
%

	\bibliographystyle{acm}
	\bibliography{references}
\end{document}